\date{\today}
\newcommand{\Z}{{\mathbb Z}}
\newcommand{\R}{{\mathbb R}}
\newcommand{\D}{{\mathbb D}}
\newcommand{\N}{{\mathbb N}}
\newcommand{\Leb}{{\mathrm{Leb}}}
\newtheorem{theorem}{Theorem}[section]
\newtheorem{lemma}[theorem]{Lemma}
\newtheorem{prop}[theorem]{Proposition}
\newtheorem{coro}[theorem]{Corollary}
\theoremstyle{definition}
\newtheorem{remark}[theorem]{Remark}
\theoremstyle{plain}
\allowdisplaybreaks \numberwithin{equation}{section}
\begin{document}

\title[Generic Spectral Results for CMV Matrices]{Generic Spectral Results for CMV Matrices with Dynamically Defined Verblunsky Coefficients}

\author[L.\ Fang]{Licheng Fang}
\address{Ocean University of China, Qingdao 266100, Shandong, China}
\email{flcouc@gmail.com}

\thanks{L.F.\ was supported by NSFC (No. 11571327) and the Joint PhD Scholarship Program of Ocean University of China}

\author[D.\ Damanik]{David Damanik}
\address{Department of Mathematics, Rice University, Houston, TX~77005, USA}
\email{damanik@rice.edu}
\thanks{D.D.\ was supported in part by NSF grant DMS--1700131 and by an Alexander von Humboldt Foundation research award}

\author[S.\ Guo]{Shuzheng Guo}
\address{Ocean University of China, Qingdao 266100, Shandong, China and Rice University, Houston, TX~77005, USA}
\email{gszouc@gmail.com}
\thanks{S.G.\ was supported by CSC (No. 201906330008) and the Fundamental Research Funds for the Central Universities (N0. 201861004)}

\begin{abstract}
We consider CMV matrices with dynamically defined Verblunsky coefficients. These coefficients are obtained by continuous sampling along the orbits of an ergodic transformation. We investigate whether certain spectral phenomena are generic in the sense that for a fixed base transformation, the set of continuous sampling functions for which the spectral phenomenon occurs is residual. Among the phenomena we discuss are the absence of absolutely continuous spectrum and the vanishing of the Lebesgue measure of the spectrum.
\end{abstract}

\maketitle

\section{Introduction}

This paper is concerned with the spectral analysis of (extended) CMV matrices with dynamically defined Verblunsky coefficients. CMV matrices are canonical matrix representations of unitary operators with a cyclic vector, and they arise naturally in the context of orthogonal polynomials on the unit circle. We refer the reader to \cite{S04, S05} for background.

Let us recall how CMV matrices arise in connection with orthogonal polynomials on the unit circle. Suppose $\mu$ is a non-trivial probability measure on the unit circle $\partial \mathbb{D} = \{ z \in \mathbb{C} : |z| = 1 \}$, that is, $\mu(\partial \D) = 1$ and $\mu$ is not supported on a finite set. By the non-triviality assumption, the functions 1,$z$, $z^2,\cdots$ are linearly independent in the Hilbert space $\mathcal{H} = L^2(\partial\mathbb{D}, d\mu)$, and hence one can form, by the Gram-Schmidt procedure, the \emph{monic orthogonal polynomials} $\Phi_n(z)$, whose Szeg\H{o} dual is defined by $\Phi_n^{*} = z^n\overline{\Phi_n({1}/{\overline{z}})}$. There are constants $\{\alpha_n\}_{n=0}^\infty$ in $\mathbb{D}=\{z\in\mathbb{C}:|z|<1\}$, called the \emph{Verblunsky coefficients}, so that
\begin{equation}\label{eq1}
\Phi_{n+1}(z) = z \Phi_n(z) - \overline{\alpha}_n \Phi_n^*(z),
\end{equation}
which is the so-called \emph{Szeg\H{o} recurrence}. Conversely, every sequence $\{\alpha_n\}_{n=0}^\infty$ in $\mathbb{D}$ arises in this way.

The orthogonal polynomials may or may not form a basis of $\mathcal{H}$. However, if we apply the Gram-Schmidt procedure to $1, z, z^{-1}, z^2, z^{-2}, \ldots$, we will obtain a basis -- called the \emph{CMV basis}. In this basis, multiplication by the independent variable $z$ in $\mathcal{H}$ has the matrix representation
\begin{equation*}
\mathcal{C}=\left(
\begin{matrix}
\overline{\alpha}_0&\overline{\alpha}_1\rho_{0}&\rho_1\rho_0&0&0&\cdots&\\
\rho_0&-\overline{\alpha}_1\alpha_{0}&-\rho_1\alpha_0&0&0&\cdots&\\
0&\overline{\alpha}_2\rho_{1}&-\overline{\alpha}_2\alpha_{1}&\overline{\alpha}_3\rho_2&\rho_3\rho_2&\cdots&\\
0&\rho_2\rho_{1}&-\rho_2\alpha_{1}&-\overline{\alpha}_3\alpha_2&-\rho_3\alpha_2&\cdots&\\
0&0&0&\overline{\alpha}_4\rho_3&-\overline{\alpha}_4\alpha_3&\cdots&\\
\cdots&\cdots&\cdots&\cdots&\cdots&\cdots&
\end{matrix}
\right),
\end{equation*}
where $\rho_n := (1-|\alpha_n|^2)^{1/2}$ for $n \ge 0$. A matrix of this form is called a \emph{CMV matrix}.

It is sometimes helpful to also consider a two-sided extension of a matrix of this form. Namely, given a bi-infinite sequence $\{ \alpha_n \}_{n \in \Z}$ in $\D$ (and defining the $\rho_n$'s as before), we may consider the \emph{extended CMV matrix}
\begin{equation*}
\mathcal{E}=\left(
\begin{matrix}
\cdots&\cdots&\cdots&\cdots&\cdots&\cdots&\cdots\\
\cdots&-\overline{\alpha}_0\alpha_{-1}&\overline{\alpha}_1\rho_{0}&\rho_1\rho_0&0&0&\cdots&\\
\cdots&-\rho_0\alpha_{-1}&-\overline{\alpha}_1\alpha_{0}&-\rho_1\alpha_0&0&0&\cdots&\\
\cdots&0&\overline{\alpha}_2\rho_{1}&-\overline{\alpha}_2\alpha_{1}&\overline{\alpha}_3\rho_2&\rho_3\rho_2&\cdots&\\
\cdots&0&\rho_2\rho_{1}&-\rho_2\alpha_{1}&-\overline{\alpha}_3\alpha_2&-\rho_3\alpha_2&\cdots&\\
\cdots&0&0&0&\overline{\alpha}_4\rho_3&-\overline{\alpha}_4\alpha_3&\cdots&\\
\cdots&\cdots&\cdots&\cdots&\cdots&\cdots&\cdots
\end{matrix}\right).
\end{equation*}

In this paper, we consider dynamically defined Verblunsky coefficients and consider the direct spectral problem, where we analyze the spectral properties of the associated CMV matrices. To fix such a setting, we need a \emph{base transformation} $(\Omega, T)$ and a \emph{sampling function} $f : \Omega \to \D$.

Here, $\Omega$ will be a compact metric space and $T : \Omega \to \Omega$ a homeomorphism. The function $f : \Omega \to \D$ is assumed to be continuous. The associated Verblunsky coefficients are then given by
$$
\alpha_n = \alpha_n(\omega) := f(T^n \omega).
$$
Here, $\omega \in \Omega$ is an arbitrary initial point whose orbit under $T$ is sampled using $f$. Since $T$ is invertible, we can take all $n \in \Z$ in this definition. Thus, we can consider both standard and extended CMV matrices. The connection between the two is discussed in detail in \cite{S04, S05}. It is often natural to initially study the two-sided case, even if one ultimately is interested in the one-sided case. Some tools are naturally two-sided; for example, Kotani theory. One can then pass from results for the extended matrices to results for standard matrices by restriction. The latter procedure is well-known and hence we will focus on the two-sided case in this paper.

Our goal is to establish the genericity of certain spectral phenomena. To do so, we will fix the base transformation and then ask for how many $f \in C(\Omega,\D)$ the phenomenon in question occurs. If this set of $f$'s is \emph{residual} (i.e., it contains a dense $G_\delta$ set), then we say that the phenomenon is \emph{generic}. Note that even after fixing $(\Omega, T, f)$, the operator $\mathcal{E} = \mathcal{E}_\omega$ still depends on $\omega \in \Omega$. This is dealt with by considering the concepts of minimality or ergodicity.

If $T$ is \emph{minimal} (i.e., every $T$-orbit $\{ T^n \omega : n \in \Z \}$ is dense in $\Omega$), then some spectral properties of $\mathcal{E}_\omega$ are independent of the choice of $\omega$. For example, the spectrum $\sigma(\mathcal{E}_\omega)$ and the absolutely continuous spectrum $\sigma_\mathrm{ac}(\mathcal{E}_\omega)$ are independent of $\omega$ in this case:

\begin{theorem}\label{t.main0a}
Let $(\Omega, T, f)$ be as above and suppose that $T$ is minimal. Then, there are compact sets $\Sigma, \Sigma_\mathrm{ac} \subseteq \partial \D$, such that for every $\omega \in \Omega$, we have $\sigma(\mathcal{E}_\omega) = \Sigma$ and $\sigma_\mathrm{ac}(\mathcal{E}_\omega) = \Sigma_\mathrm{ac}$.
\end{theorem}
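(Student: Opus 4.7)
The plan is to prove both invariance statements by combining two ingredients: (i) a unitary equivalence between $\mathcal{E}_\omega$ and $\mathcal{E}_{T\omega}$, and (ii) the strong continuity of the map $\omega \mapsto \mathcal{E}_\omega$. For (i), the identity $\alpha_n(T\omega) = f(T^{n+1}\omega) = \alpha_{n+1}(\omega)$ shows that $\mathcal{E}_{T\omega}$ differs from $\mathcal{E}_\omega$ only by a reindexing of the Verblunsky coefficients; the standard CMV manipulations (see \cite{S04, S05}) then produce an explicit unitary $U$ on $\ell^2(\Z)$ with $\mathcal{E}_{T\omega} = U \mathcal{E}_\omega U^*$, so that $\sigma(\mathcal{E}_\omega) = \sigma(\mathcal{E}_{T\omega})$ and $\sigma_{\mathrm{ac}}(\mathcal{E}_\omega) = \sigma_{\mathrm{ac}}(\mathcal{E}_{T\omega})$ for every $\omega$. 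For (ii), continuity of $f$ on the compact set $\Omega$ gives a uniform bound $|f(\omega)| \le \rho < 1$ and entrywise continuity of the matrix representation; combined with the uniform norm bound $\|\mathcal{E}_\omega\| = 1$, this yields that $\omega_k \to \omega$ in $\Omega$ implies $\mathcal{E}_{\omega_k} \to \mathcal{E}_\omega$ in the strong operator topology.

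The invariance of $\Sigma$ then follows quickly. Fix $\omega, \omega' \in \Omega$; by minimality there exist integers $n_k$ with $T^{n_k}\omega \to \omega'$. Then (ii) gives $\mathcal{E}_{T^{n_k}\omega} \to \mathcal{E}_{\omega'}$ strongly, while (i) gives $\sigma(\mathcal{E}_{T^{n_k}\omega}) = \sigma(\mathcal{E}_\omega)$ for each $k$. Since each $\mathcal{E}_{T^{n_k}\omega}$ is unitary (hence normal), a Weyl-sequence argument shows that the spectrum is lower semicontinuous under strong limits: given $\lambda \in \sigma(\mathcal{E}_{\omega'})$, a unit approximate eigenvector $\psi$ of $\mathcal{E}_{\omega'}$ for $\lambda$ is also approximate for $\mathcal{E}_{T^{n_k}\omega}$ when $k$ is large (apply strong convergence to the fixed vector $\psi$), whence $\lambda \in \sigma(\mathcal{E}_\omega)$. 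Thus $\sigma(\mathcal{E}_{\omega'}) \subseteq \sigma(\mathcal{E}_\omega)$, and exchanging the roles of $\omega$ and $\omega'$ yields equality; we let $\Sigma$ be this common set.

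For $\Sigma_{\mathrm{ac}}$ the analogous step is the main obstacle. Step (i) alone only gives $T$-invariance of $\omega \mapsto \sigma_{\mathrm{ac}}(\mathcal{E}_\omega)$, and in contrast to the full spectrum, $\sigma_{\mathrm{ac}}$ is not lower semicontinuous under strong limits in the easy Weyl-sequence way, so the orbit-density argument cannot be applied verbatim. The standard remedy is to invoke a Last--Simon style semicontinuity theorem for absolutely continuous spectra of CMV matrices: if the Verblunsky coefficients of $\mathcal{E}_{\omega_k}$ converge pointwise (with the uniform bound $\rho < 1$) to those of $\mathcal{E}_\omega$, then the essential support of $\sigma_{\mathrm{ac}}$ is semicontinuous in the appropriate sense. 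Combining this input with the $T$-invariance from (i) and with the density of orbits from minimality produces the common compact set $\Sigma_{\mathrm{ac}} \subseteq \partial \D$, completing the proof.
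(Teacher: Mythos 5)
Your treatment of the full spectrum is essentially the paper's argument: minimality gives $T^{n_k}\omega \to \omega'$, continuity of $f$ gives strong convergence $\mathcal{E}_{T^{n_k}\omega} \to \mathcal{E}_{\omega'}$, shift-covariance gives $\sigma(\mathcal{E}_{T^{n_k}\omega}) = \sigma(\mathcal{E}_{\omega})$, and the Weyl-sequence/normality argument gives the inclusion, hence equality by symmetry. (One small point worth a sentence in a written version: because of the $\mathcal{L}\mathcal{M}$-factorization, shifting the Verblunsky coefficients by one unit does not conjugate $\mathcal{E}_\omega$ by the coordinate shift alone; one uses that $\mathcal{L}\mathcal{M}$ and $\mathcal{M}\mathcal{L}$ are unitarily equivalent. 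Your hedge ``standard CMV manipulations'' covers this.)

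For $\Sigma_{\mathrm{ac}}$ you diverge from the paper, and the step you leave to a ``Last--Simon style semicontinuity theorem'' is where the real content lies; as stated, your input is not a true theorem. The essential support of the a.c.\ spectrum is \emph{not} semicontinuous (in either direction) under mere pointwise convergence of uniformly bounded coefficients: truncating a random sequence of Verblunsky coefficients to be trivial outside $\{-n,\dots,n\}$ produces operators with full a.c.\ spectrum converging pointwise to one with none. The correct Last--Simon-type input is quantitative --- semicontinuity of the sets on which the transfer matrices satisfy a \emph{uniform} average bound --- and it rescues the argument only because your approximants $\mathcal{E}_{T^{n_k}\omega}$ are all translates of the single operator $\mathcal{E}_{\omega}$, so the relevant bounds are automatically uniform along the sequence. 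You would also need to locate or prove this statement for extended CMV matrices. The paper sidesteps all of this: a finite-rank perturbation decouples $\mathcal{E}_\omega$ into $\mathcal{C}^+_\omega \oplus \mathcal{C}^-_\omega$, the invariance of a.c.\ spectrum of unitaries under trace-class perturbations \cite{C74, CP76} gives $\sigma_{\mathrm{ac}}(\mathcal{E}_\omega) = \sigma_{\mathrm{ac}}(\mathcal{C}^+_\omega) \cup \sigma_{\mathrm{ac}}(\mathcal{C}^-_\omega)$, and the $\omega$-independence of each half-line piece (and their coincidence) is quoted from \cite[Theorem~10.9.11]{S05} and the surrounding discussion, where precisely the uniform-bound argument you are gesturing at has already been carried out for half-line CMV matrices. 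So your route can be made to work and is the standard one in the Schr\"odinger literature, but as written the key lemma is misstated and unproved, whereas the paper's reduction to the half-line case imports it from the literature.
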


On the other hand, there are always $T$-ergodic measures $\beta$ (i.e., $T$-invariant measures so that any invariant measurable set must have measure zero or one). For any fixed such measure, the spectrum and the spectral type of $\mathcal{E}_\omega$ will be almost surely independent of $\omega$.

\begin{theorem}\label{t.main0b}
Let $(\Omega, T, f)$ be as above and suppose that $\beta$ is a $T$-ergodic Borel probability measure. Then, there are compact sets $\Sigma, \Sigma_\mathrm{ac}, \Sigma_\mathrm{sc}, \Sigma_\mathrm{pp} \subseteq \partial \D$, such that for $\beta$-almost every $\omega \in \Omega$, we have $\sigma(\mathcal{E}_\omega) = \Sigma$ and $\sigma_\bullet(\mathcal{E}_\omega) = \Sigma_\bullet$ for $\bullet \in \{ \mathrm{ac}, \mathrm{sc}, \mathrm{pp} \}$.
\end{theorem}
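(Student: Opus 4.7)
The plan is to adapt the classical argument for ergodic families of self-adjoint operators (see e.g.\ Carmona--Lacroix or Cycon--Froese--Kirsch--Simon) to the unitary CMV setting, following the template developed by Simon in \cite{S05} for ergodic OPUC. The two ingredients are (a) a covariance relation between $\mathcal{E}_\omega$ and $\mathcal{E}_{T\omega}$ and (b) measurability in $\omega$ of the relevant spectral objects.

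First I would establish covariance. The shift by two units on $\ell^2(\Z)$ conjugates $\mathcal{E}_\omega$ to $\mathcal{E}_{T^2\omega}$, since shifting the Verblunsky sequence by an even amount preserves the CMV ``zig-zag'' parity structure. For a shift by one unit the matrix $\mathcal{E}_{T\omega}$ is not literally the image of $\mathcal{E}_\omega$ under a basis shift, but it is still unitarily equivalent to $\mathcal{E}_\omega$: writing $\mathcal{E}_\omega = \mathcal{L}_\omega \mathcal{M}_\omega$ in the standard factorization into unitary block-diagonal factors, the matrix $\mathcal{E}_{T\omega}$ corresponds (up to a basis shift by one unit) to $\mathcal{M}_\omega \mathcal{L}_\omega$, and $LM$ is always unitarily equivalent to $ML$ for unitary $L,M$. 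In particular $\sigma(\mathcal{E}_{T\omega}) = \sigma(\mathcal{E}_\omega)$ and $\sigma_\bullet(\mathcal{E}_{T\omega}) = \sigma_\bullet(\mathcal{E}_\omega)$ for $\bullet \in \{\mathrm{ac},\mathrm{sc},\mathrm{pp}\}$, for every $\omega \in \Omega$.

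For the spectrum itself, continuity of $f$ and $T$ yields strong-operator continuity of $\omega \mapsto \mathcal{E}_\omega$, so $\omega \mapsto \sigma(\mathcal{E}_\omega)$ is Borel measurable (in fact lower-semicontinuous) as a set-valued map into the compact subsets of $\partial\D$. Fixing a countable base $\{U_k\}_{k\in\N}$ for the topology of $\partial\D$, the sets
\[
A_k := \{\omega \in \Omega : \sigma(\mathcal{E}_\omega) \cap U_k \neq \emptyset\}
\]
are Borel and $T$-invariant by the covariance established above, hence have $\beta$-measure $0$ or $1$ by ergodicity. Setting
\[
\Sigma := \partial\D \setminus \bigcup_{k :\, \beta(A_k) = 0} U_k,
\]
one checks directly that $\sigma(\mathcal{E}_\omega) = \Sigma$ on the $\beta$-conull set $\Omega_0 := \bigcap_{k :\, \beta(A_k) = 0} (\Omega \setminus A_k)$, using that a point lies outside $\sigma(\mathcal{E}_\omega)$ iff some basic neighborhood does.

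For the spectral parts I would run the same argument with the events
\[
B_{k,\bullet} := \{\omega \in \Omega : P^{\mathcal{E}_\omega}_\bullet(U_k) \neq 0 \},
\]
where $P^{\mathcal{E}_\omega}_\bullet$ denotes the spectral projection associated with the $\bullet$-part of $\mathcal{E}_\omega$. Each $B_{k,\bullet}$ is $T$-invariant by the unitary equivalence above. The main technical obstacle is verifying Borel measurability of $\omega \mapsto P^{\mathcal{E}_\omega}_\bullet(U_k)$, especially for $\bullet \in \{\mathrm{sc},\mathrm{pp}\}$; I would handle this via the standard Lebesgue-decomposition machinery for spectral measures as in Carmona--Lacroix, transferred from the self-adjoint to the unitary setting, which provides the requisite weak measurability. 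Once this is in hand, ergodicity gives $\beta(B_{k,\bullet}) \in \{0,1\}$ and the deterministic sets $\Sigma_\bullet$ are constructed in complete analogy with $\Sigma$.
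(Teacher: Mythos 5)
Your proposal is correct and follows essentially the same route as the paper: covariance of the family $\{\mathcal{E}_\omega\}$ under $T$, weak measurability of the partial spectral projections, and the standard ergodicity/zero--one construction of the deterministic sets. The only difference is one of emphasis --- you spell out the one-step covariance (via the $\mathcal{L}\mathcal{M}$ versus $\mathcal{M}\mathcal{L}$ factorization), which the paper leaves to \cite[Theorem~10.16.1]{S05}, whereas the measurability of $\mathcal{P}^{(\mathrm{sc})}_\omega$ and $\mathcal{P}^{(\mathrm{pp})}_\omega$, which the paper identifies as the key technical point and settles via the RAGE theorem for unitary operators together with a countable-supremum description of the singular part of a measure, is precisely the step you defer to the Carmona--Lacroix machinery.
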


These results are standard in the discrete Schr\"odinger operator setting \cite{CL90, CFKS87, D17, DF, P80}, and the statement in Theorem~\ref{t.main0b} about the spectrum being almost everywhere constant is \cite[Theorem~10.16.1]{S05}. Moreover, while Theorem~\ref{t.main0a} is not stated explicitly in \cite{S05}, it will quickly follow from the proof of \cite[Theorem~10.9.11]{S05}. The other statements in Theorems~\ref{t.main0a} and \ref{t.main0b} are currently not yet in the literature, but their proofs are very similar to the proofs of the corresponding results in the Schr\"odinger operator setting, which can be found, for example, in \cite{CL90, CFKS87, DF}. In fact, closely related work was done by Geronimo-Teplyaev in \cite{GT94}; see especially \cite[Theorem~3.3 {\&} Theorem~3.4]{GT94}. However, since \cite{GT94} was written before extended CMV matrices were introduced, that paper does not contain Theorems~\ref{t.main0a} and \ref{t.main0b} in the formulation above. Since these results are so fundamental, and yet never explicitly stated and discussed, but essentially known to experts, we try to strike a balance here by stating them explicitly as theorems and sketching their proofs in an appendix to this paper. Full details are expected to be included in the second edition of \cite{S05}.

When we want to emphasize the dependence of these sets on the sampling function, we will write $\Sigma(f), \Sigma_\mathrm{ac}(f), \Sigma_\mathrm{sc}(f), \Sigma_\mathrm{pp}(f)$. (They also depend on $(\Omega, T, \beta)$, but in what follows, the base dynamics will be fixed and hence left implicit in the notation.)

Our first generic result concerns the absolutely continuous spectrum.

\begin{theorem}\label{t.main1}
Suppose that $\Omega$ is a compact metric space, $T : \Omega \to \Omega$ is a homeomorphism, and $\beta$ is a non-atomic $T$-ergodic Borel probability measure on $\Omega$. Then,
$$
\{ f \in C(\Omega,\D) : \Sigma_\mathrm{ac}(f) = \emptyset \}
$$
is residual.
\end{theorem}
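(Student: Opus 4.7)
The plan is to prove that
\[
\mathcal{R} := \{ f \in C(\Omega,\D) : \Sigma_{\mathrm{ac}}(f) = \emptyset \}
\]
is both a $G_\delta$ set and dense in $C(\Omega,\D)$. The common reduction that drives both steps is Kotani theory for CMV matrices, as developed by Geronimo--Teplyaev \cite{GT94} and Simon \cite{S05}: one has the identification of $\Sigma_{\mathrm{ac}}(f)$ with the essential closure in $\partial\D$ of
\[
\mathcal{Z}(f) := \{ z \in \partial\D : L(z,f) = 0 \},
\]
where $L(\cdot,f)$ is the Lyapunov exponent of the Szeg\H{o} cocycle generated by the Verblunsky coefficients $\alpha_n(\omega) = f(T^n\omega)$. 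Because the essential closure of a Lebesgue-null subset of $\partial\D$ is empty, the condition $\Sigma_{\mathrm{ac}}(f) = \emptyset$ is equivalent to $|\mathcal{Z}(f)| = 0$.

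For the $G_\delta$ part, I would show that $f \mapsto |\mathcal{Z}(f)|$ is upper semicontinuous on $C(\Omega,\D)$. Two ingredients enter. First, $L(z,\cdot)$ is upper semicontinuous: by Kingman's subadditive ergodic theorem one has the representation $L(z,f) = \inf_{n \ge 1} \frac{1}{n}\int_\Omega \log \|T_n(z,\omega;f)\|\, d\beta(\omega)$, exhibiting $L(z,\cdot)$ as an infimum of continuous functions on $C(\Omega,\D)$. Second, the Szeg\H{o}-type sum rule for OPUC expresses the averaged exponent $\int_{\partial\D} L(z,f)\,\frac{|dz|}{2\pi}$ in terms of $\int_\Omega \log(1-|f(\omega)|^2)\,d\beta(\omega)$, which depends continuously on $f$ (since $f(\Omega)$ is a compact subset of the open disk $\D$). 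Combining upper semicontinuity of $L$ with continuity of its integral, together with the reverse Fatou lemma, forces $L(\cdot,f_k) \to L(\cdot,f)$ in $L^1(\partial\D)$ and hence in Lebesgue measure whenever $f_k \to f$. A routine measure-theoretic comparison then yields $\limsup_k |\mathcal{Z}(f_k)| \le |\mathcal{Z}(f)|$, so $\mathcal{R} = \bigcap_{k\ge 1} \{ f : |\mathcal{Z}(f)| < 1/k \}$ is $G_\delta$.

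For density, given $f_0 \in C(\Omega,\D)$ and $\varepsilon > 0$, the goal is to construct $f$ within $\varepsilon$ of $f_0$ such that $L(z,f) > 0$ for Lebesgue-a.e.\ $z \in \partial\D$, so that $f \in \mathcal{R}$. Non-atomicity of $\beta$ is the crucial hypothesis, entering through the Rokhlin--Halmos lemma: for any height $N \in \N$ and tolerance $\delta > 0$ one can find a measurable base $B \subseteq \Omega$ with $B, TB, \ldots, T^{N-1}B$ pairwise disjoint and of total $\beta$-measure at least $1-\delta$. Approximating $B$ from outside by open sets and using continuous bump functions, one produces a continuous perturbation of $f_0$ that, along $\beta$-typical orbits, exhibits long blocks of sufficiently ``disordered'' Verblunsky coefficients---for instance, so that the Szeg\H{o} transfer matrix products over the tower approximate products drawn from a finite family satisfying the Furstenberg positivity criterion. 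For $N$ large enough, this forces $L(z,f) > 0$ for Lebesgue-a.e.\ $z$.

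The chief obstacle lies in this density step. The perturbation must simultaneously be continuous on all of $\Omega$, have sup-norm smaller than $\varepsilon$, and be strong enough to yield a positive Lyapunov exponent on a Lebesgue-full subset of $\partial\D$. Balancing these requirements is the heart of the proof: it demands a quantitative positivity criterion for the Szeg\H{o} cocycle over finite blocks, applied with uniform control in $z \in \partial\D$, combined with a diagonal construction across block heights $N \to \infty$ that allows the perturbation size $\varepsilon$ to be taken arbitrarily small while retaining enough disorder to preserve positivity of $L$.
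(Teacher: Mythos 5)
Your reduction via Kotani theory ($\Sigma_{\mathrm{ac}}(f)=\emptyset$ iff $\{z:\gamma_f(z)=0\}$ is Lebesgue-null) matches the paper's, and your $G_\delta$ half is essentially sound, via a route that is genuinely different from (and in some ways cleaner than) the paper's: you integrate the Thouless formula over all of $\partial\D$ to get $\int_{\partial\D}\gamma_f\,\tfrac{|dz|}{2\pi}=-\tfrac12\int_\Omega\log(1-|f|^2)\,d\beta$, and combine this with pointwise upper semicontinuity of $\gamma_f(z)$ in $f$ and reverse Fatou to obtain $L^1(\partial\D)$-convergence of the Lyapunov exponents. The paper instead works arc by arc, using harmonicity of $\gamma_f$ off $z=0$ and a Schwarz--Christoffel conformal map onto a triangle avoiding the origin to run the mean-value argument of Avila--Damanik; your global sum-rule argument sidesteps those conformal-mapping technicalities entirely. (You should justify the sum rule via the Thouless formula for the density of \emph{states} measure, as in the paper's Appendix~B, since the version in Simon's book assumes an extra integrability condition on $\log|f|$ that you cannot impose.)

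The density half, however, contains a genuine gap, and you have in effect flagged it yourself. You propose to build, via Rokhlin towers and bump functions, a small continuous perturbation whose transfer-matrix blocks satisfy a quantitative Furstenberg-type positivity criterion uniformly (or Lebesgue-a.e.) in $z\in\partial\D$. No such criterion is available: Furstenberg positivity pertains to genuinely random (i.i.d.) matrix products, not to a cocycle over a fixed deterministic ergodic base, and even in the random case it gives no uniform finite-block control in the spectral parameter. The step ``for $N$ large enough, this forces $L(z,f)>0$ for a.e.\ $z$'' is precisely the assertion that needs proof, and it does not follow from the tower construction. The idea you are missing is Kotani's determinism theorem (Theorem~\ref{th2} in the paper, i.e.\ \cite[Theorem~10.11.3]{S05}): for \emph{finitely-valued} aperiodic ergodic Verblunsky coefficients, $\gamma_f(z)>0$ for Lebesgue-a.e.\ $z$ automatically, with no quantitative estimate required. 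Non-atomicity of $\beta$ enters not through Rokhlin towers applied to continuous perturbations, but through the fact (from \cite[Lemma~2]{AD05}) that finitely-valued aperiodic sampling functions are dense in $L^\infty(\Omega,\D)$ with respect to the $L^1$ norm; one then approximates such a discontinuous $s$ by continuous functions in $L^1$ inside a fixed $L^\infty$-ball and invokes upper semicontinuity of $f\mapsto\Leb\{z\in I:\gamma_f(z)=0\}$ with respect to $L^1$ convergence to transfer the conclusion back to continuous $f$. This is why the paper's semicontinuity lemma is formulated on $(L^1(\Omega)\cap B_r(L^\infty(\Omega)),\|\cdot\|_1)$ rather than on $C(\Omega,\D)$: the $L^1$ topology is needed for the density step, not just the $G_\delta$ step. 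Your sum-rule argument can in fact be upgraded to this $L^1$ setting, but without Kotani's determinism theorem and the $L^1$-density of finitely-valued aperiodic functions, the density of $\{f:\Sigma_{\mathrm{ac}}(f)=\emptyset\}$ remains unproved.
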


Many modern results for OPUC and CMV matrices are analogs of results first proved in the setting of OPRL and Jacobi matrices, and most often in the special case of discrete Schr\"odinger operators. Indeed, almost the entire text \cite{S05} was written in this spirit. The results presented in the present paper are of this type as well. The discrete Schr\"odinger version of Theorem~\ref{t.main1} was obtained by Avila-Damanik in \cite{AD05}.

Our second generic result concerns the Lebesgue measure of the spectrum. Here, the base dynamics will be of a particular form: we will consider aperiodic subshifts that satisfy the Boshernitzan condition. Recall that a \emph{subshift} is a closed shift-invariant subset $\Omega$ of $A^\Z$, where $A$ is a finite set carrying the discrete topology and $A^\Z$ is endowed with the product topology. The map $T : \Omega \to \Omega$ is given by the shift $(T \omega)_n = \omega_{n+1}$, and it is clearly a homeo\-morphism. We say that a subshift $\Omega$ satisfies the \emph{Boshernitzan condition} (B) if it is minimal and there is a $T$-invariant Borel probability measure $\mu$ such that
$$
\limsup_{n \to \infty} n \cdot \min \{ \mu([w]) : w \in \Omega_n \} > 0.
$$
Here $\Omega_n = \{ \omega_1 \ldots \omega_n : \omega \in \Omega \}$ is the set of words of length $n$ that occur in elements of $\Omega$ and $[w]$ is the cylinder set $[w] = \{ \omega \in \Omega : \omega_1 \ldots \omega_n = w \}$. This condition was introduced by Boshernitzan in \cite{B92} as a sufficient condition for unique ergodicity.

\begin{theorem}\label{t.main2}
Suppose that $(\Omega,T)$ is an aperiodic subshift that satisfies {\rm (B)}. Then,
$$
\{ f \in C(\Omega,\D) : \Leb(\Sigma(f)) = 0 \}
$$
is residual.
\end{theorem}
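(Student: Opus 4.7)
\emph{Strategy and the $G_\delta$ half.} Let $R := \{f \in C(\Omega,\D) : \Leb(\Sigma(f)) = 0\}$. I plan to show that $R$ is both a $G_\delta$ and dense in $C(\Omega,\D)$. For each fixed $\omega \in \Omega$, the map $f \mapsto \mathcal{E}_\omega$ is norm-continuous from $C(\Omega,\D)$ into the unitary operators on $\ell^2(\Z)$, because the CMV entries depend continuously on the Verblunsky coefficients $\alpha_n = f(T^n\omega)$ and the matrix is banded. Norm-continuity of normal operators implies Hausdorff-continuity of the spectrum; by Theorem~\ref{t.main0a}, applicable because (B) includes minimality, this common spectrum is $\Sigma(f)$. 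Hence, if $f_n \to f$ uniformly, then for each $\varepsilon>0$ the open $\varepsilon$-neighborhood $U_\varepsilon(\Sigma(f))$ of $\Sigma(f)$ in $\partial\D$ contains $\Sigma(f_n)$ eventually, so $\limsup_n \Leb(\Sigma(f_n)) \le \Leb(U_\varepsilon(\Sigma(f))) \downarrow \Leb(\Sigma(f))$ as $\varepsilon \downarrow 0$. Thus $f \mapsto \Leb(\Sigma(f))$ is upper semi-continuous, each set $\{f : \Leb(\Sigma(f)) < 1/n\}$ is open, and $R$ is their $G_\delta$ intersection.

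\emph{Density via locally constant sampling.} Since $\Omega$ is a subshift over a finite alphabet, cylinder sets are clopen and form a basis of the topology, so locally constant maps $f:\Omega\to\D$ are uniformly dense in $C(\Omega,\D)$. It therefore suffices to show $\Leb(\Sigma(f))=0$ for a dense subclass of the locally constant functions --- the CMV analog of the Damanik--Lenz theorem. Fix $z \in \partial\D$ and let $A^z(\omega)$ be the Szeg\H{o} cocycle at $z$, which is locally constant in $\omega$. By Lenz's uniform subadditive ergodic theorem (applicable to locally constant cocycles over subshifts satisfying (B)), the Lyapunov exponent $L(z) = \lim_n n^{-1} \log \|A^z_n(\omega)\|$ exists uniformly in $\omega$ and is continuous in $z$. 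Uniform existence combined with $L(z) > 0$ upgrades to uniform hyperbolicity of $A^z$ (the CMV version of Johnson's theorem), forcing $z \notin \Sigma(f)$. Hence $\Sigma(f) \subseteq \{z : L(z) = 0\}$. By (B), $(\Omega,T)$ is uniquely ergodic \cite{B92}, and Kotani theory for ergodic CMV matrices (Geronimo--Teplyaev \cite{GT94}; see also \cite[Ch.~10]{S05}) then gives $\Leb(\{z : L(z) = 0\}) = 0$ whenever the resulting Verblunsky process is aperiodic --- a condition that holds, for a locally constant $f$ on an aperiodic subshift, outside a lower-dimensional exceptional subclass (e.g.\ constant $f$). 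This exceptional class is negligible for density: every $f \in C(\Omega,\D)$ can be approximated by non-degenerate locally constant $g$.

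\emph{Main obstacle.} The soft skeleton above is essentially formal. The real work lies in the CMV Damanik--Lenz package: uniform existence of $L(z)$ under (B), and the Johnson-type upgrade from $L(z)>0$ to uniform hyperbolicity of the Szeg\H{o} $SU(1,1)$-cocycle. Both are faithful adaptations of their Schr\"odinger counterparts, but the two-step structure of the CMV transfer matrix and the fact that the spectral parameter $z$ ranges over $\partial\D$ rather than $\R$ require additional bookkeeping. A secondary technical point, addressed by perturbation, is ensuring aperiodicity of the Verblunsky process for enough locally constant $f$ to conclude density.
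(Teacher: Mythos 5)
Your proposal is correct and follows essentially the same route as the paper: upper semi-continuity of $f \mapsto \Leb(\Sigma(f))$ --- obtained in the paper from $d_{\mathrm{H}}(\sigma(U),\sigma(V)) \le \|U-V\|$ together with $\|\mathcal{E}_{\alpha}-\mathcal{E}_{\alpha'}\| \le 6\sqrt{2}\,\|\alpha-\alpha'\|_\infty^{1/2}$, which is exactly your norm-continuity step --- yields the $G_\delta$ property, and density comes from locally constant sampling functions. The ``main obstacle'' you flag (uniform existence of the Lyapunov exponent for locally constant Szeg\H{o} cocycles under (B), the Johnson-type upgrade to uniform hyperbolicity, and Kotani theory for aperiodic finitely-valued processes) is precisely the content of the reference \cite{DL07}, which the paper invokes as a black box at this point, so no new work is required there; your additional care about ensuring aperiodicity of the induced Verblunsky process before applying Kotani theory is the one place where your sketch is more explicit than the paper's citation.
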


Of course zero-measure spectrum implies empty absolutely continuous spectrum, but the latter property holds under much weaker assumptions, and the former property is not expected to hold in similar generality. Theorem~\ref{t.main2} was obtained for discrete Schr\"odinger operators by Damanik-Lenz in \cite{DL19}.

\begin{remark}
There is a third generic result in the discrete Schr\"odinger operator literature. Namely, Boshernitzan-Damanik proved in \cite{BD08} that the metric repetition property implies that for a generic $f$, we have $\Sigma_\mathrm{pp}(f) = \emptyset$. Combining this with \cite{AD05} and intersecting two residual sets, we see that the almost sure presence of purely singular continuous spectrum is generic for suitable base dynamics. Boshernitzan-Damanik established the metric repetition property for several examples \cite{BD08, BD09}, including shifts and skew-shifts on tori. The reason why we do not work out a CMV version of these results is that this work has already been done by Ong \cite{O13}.
\end{remark}

\begin{remark}
As indicated above, while the generic results in Theorems~\ref{t.main1} and \ref{t.main2} are formulated for extended CMV matrices, they imply the corresponding results for standard (one-sided) CMV matrices by restriction. Specifically, for a suitable rank-one perturbation, each extended CMV matrix $\mathcal{E}_\omega$ decouples into a direct sum, one of whose summands is $\mathcal{C}_\omega$; see the second proof of Theorem~10.16.3 on p.704 of \cite{S05}. In particular, if $\mathcal{E}_\omega$ has purely singular spectrum, then $\mathcal{C}_\omega$ has purely singular spectrum, and if $\mathcal{E}_\omega$ has zero-measure spectrum, then $\mathcal{C}_\omega$ has zero-measure spectrum. Here, $\mathcal{E}_\omega$ is the extended CMV matrix with Verblunsky coefficients $\{ \alpha_n(\omega) \}_{n \in \Z}$, while $\mathcal{C}_\omega$ is the standard CMV matrix with Verblunsky coefficients $\{ \alpha_n(\omega) \}_{n \ge 0}$.
\end{remark}

The organization of this paper is as follows. In Section~\ref{s.2} we recall some key concepts from the theory of CMV matrices.  Theorems~\ref{t.main1} and \ref{t.main2} are proved in Sections~\ref{s.3} and \ref{s.4}, respectively. The appendix contains material that is crucial to our work in the main part of the paper, and which is not yet available in the literature in the form needed. Specifically, in Appendix~\ref{s.a} we sketch the proofs of Theorems~\ref{t.main0a} and \ref{t.main0b} and in Appendix~\ref{a.appb} we state (and derive from known results) the key elements of Kotani theory for extended CMV matrices.

\section*{Acknowledgment}

We are indebted to Jake Fillman for pointing out that the ideas in \cite{DFL17} should make the short proof of Theorem~\ref{t.main2} we give in this paper possible and to Fritz Gesztesy for very helpful pointers to the literature.

\section{Preliminaries}\label{s.2}

In this section we recall and collect a few tools that we will need in the proofs of Theorems~\ref{t.main1} and \ref{t.main2}, which are given in Sections~\ref{s.3} and \ref{s.4}, respectively. All these concepts are discussed in detail in \cite{S04, S05}, and we refer the reader to those monographs for more information.

As mentioned in the introduction, we are primarily interested in Verblunsky coefficients of the form $\alpha_n = \alpha_n(\omega) := f(T^n \omega)$, $\omega \in \Omega$, $n \in \Z$, where $\Omega$ is a compact metric space, $T : \Omega \to \Omega$ is a homeomorphism, and $f : \Omega \to \D$ is continuous. However, since we will want to approximate continuous $f$'s by discontinuous functions, we have to consider a more general setting, where $f$ is assumed to be measurable (and is subject to additional conditions, which will be imposed as needed).

If we fix a $T$-ergodic probability measure $\beta$, then by Theorem~\ref{t.main0b} the associated extended CMV matrices almost surely have a common spectrum $\Sigma$ in the sense that $\sigma(\mathcal{E}_\omega) = \Sigma$ for $\beta$-almost every $\omega \in \Omega$.

If we normalize the monic orthogonal polynomials $\Phi_n(z)$ by
$$
\phi_n(z)=\frac{\Phi_n(z)}{\|\Phi_n(z)\|_\mu},
$$
it is easy to see that \eqref{eq1} is equivalent to
\begin{equation}\label{eq2}
\rho_n(\omega) \phi_{n+1}(z) = z \phi_n(z) - \overline{\alpha_n(\omega)} \phi_n^*(z).
\end{equation}

Applying $^*$ to \eqref{eq2}, we thus obtain
\begin{equation}\label{eq3}
\left(
\begin{matrix}
\phi_{n}\\
\phi_{n}^*
\end{matrix}
\right)
=
T_n^z(\omega)
\left(
\begin{matrix}
\phi_0\\
\phi_0^*
\end{matrix}
\right),
\end{equation}
where
\begin{equation*}
T_n^z(\omega) = \prod_{j=n-1}^0 A^z(T^j \omega),
\end{equation*}
and
$$
A^z(\omega) = \frac{1}{\rho_0(\omega)}
\left(
\begin{matrix}
z&-\overline{\alpha_0(\omega)}&\\
-\alpha_0(\omega)z&1&
\end{matrix}
\right).
$$
When
\begin{equation}\label{e.znonzero}
z \not= 0,
\end{equation}
we can normalize $A^z(\omega)$ and get the determinant $1$ matrix $M^z(\omega)$, known as the $\mathbf{SU}(1,1)$-valued \emph{Szeg\H{o} cocycle map}, given by
\begin{equation*}
M^z(\omega)=\frac{1}{\rho_0(\omega)}
\left(
\begin{matrix}
\sqrt{z}&-\frac{\overline{\alpha_0(\omega)}}{\sqrt{z}}&\\
-\alpha_0(\omega)\sqrt{z}&\frac{1}{\sqrt{z}}&
\end{matrix}
\right).
\end{equation*}
The spectral properties of CMV matrices with dynamically defined Verblunsky coefficients may be investigated using the \emph{Szeg\H{o} cocycle} $(\omega,n) \rightarrow M_n^z(\omega)$ over $T$ given by
\begin{equation*}
M_n^z(\omega)=
\left\{
\begin{aligned}
M^z(T^{n-1}\omega)\cdots M^z (\omega),&\qquad & n> 0, \\
Id,&\qquad& n=0,\\
M^z(T^{n}\omega)^{-1}\cdots M^z (T^{-1}\omega)^{-1}, & \qquad & n<0,
\end{aligned}
\right.
\end{equation*}
where $Id$ is the $2\times 2$ identity matrix.

The \emph{Lyapunov exponent} of this cocycle is defined by
$$
\gamma_f(z) := \lim_{n \rightarrow \infty} \frac{1}{n} \int_{\Omega} \log \| M_n^z(\omega) \| \, d\beta(\omega) \geq0.
$$
By Kingman's Subadditive Ergodic Theorem, one has
$$
\gamma_f(z) = \lim_{n \rightarrow \infty} \frac{1}{n} \log \| M_n^z(\omega) \|
$$
for $\beta$-almost every $\omega$. 

Kotani theory provides a description of $\Sigma_\mathrm{ac}$ in terms of the Lyapunov exponent:
\begin{equation}\label{e.kotanithm}
\Sigma_\mathrm{ac} = \overline{\{ z \in \partial \D : \gamma_f(z) = 0 \}}^{\mathrm{ess}}.
\end{equation}
This result is not explicitly stated in \cite{S04} since the discussion there focuses on CMV matrices, while our discussion focuses on extended CMV matrices. It does however follow from the treatment there; compare Theorem~\ref{th1} in Appendix~B.

The two conditions under which Kotani theory is developed in \cite[Section~10.11]{S04} are
\begin{equation}\label{eq4}
\int_{\Omega} - \log (1 - |f(\omega)|) \, d\beta(\omega) < \infty
\end{equation}
and
\begin{equation}\label{eq5}
\int_{\Omega} -\log (|f(\omega)|) \, d\beta(\omega) < \infty.
\end{equation}
We remark that condition \eqref{eq4} is of critical importance to the existence of the Lyapunov exponent (which in turn is the central object in Kotani theory). On the other hand, condition \eqref{eq5} is used in \cite{S04} merely to establish the existence of the density of zeros measure as a measure on the unit circle, and only the consequence is needed, while the sufficient condition is conjecturally much too strong and should be replaced by a simple non-triviality assumption (i.e., that the Verblunsky coefficients do not vanish identically); compare the second remark after the statement of \cite[Theorem~10.5.26]{S04} and \cite[Conjecture~10.5.23]{S04}. Since for our applications of Kotani theory we do not want to have to assume condition \eqref{eq5}, we describe a version of Kotani theory for extended CMV matrices, only assuming \eqref{eq4} but not \eqref{eq5}, in Appendix~B.

For $\omega \in \Omega$, let $d\mu_\omega$ be the probability measure on $\partial\mathbb{D}$ associated with the one-sided sequence $\{ \alpha_n(\omega) \}_{n \ge 0}$. There is an associated Carath\'eodory function, $G : \mathbb{D} \rightarrow -i \mathbb{C}_+$, given by
\begin{equation*}
G(z) = \int \frac{e^{i \theta} + z}{e^{i \theta} - z} \, d\mu_\omega(\theta),
\end{equation*}
and a Schur function, $g : \mathbb{D} \rightarrow \mathbb{D}$, given by
\begin{equation*}
G(z) = \frac{1 + z g(z)}{1 - zg(z)}.
\end{equation*}

For $\lambda = e^{i \psi} \in \partial\mathbb{D}$, we define the OPUC $\phi_n^\lambda$ associated with the boundary condition $\lambda$ as follows:
$$
\phi_n^\lambda (z, d\mu_\omega) = \phi_n (z, d\mu^\lambda_\omega),
$$
where $d\mu_\omega^\lambda$, the Aleksandrov measure, is defined by
$$
\alpha_n(d\mu_\omega^\lambda) = \lambda \alpha_n(d\mu_\omega).
$$
The OPUC corresponding to the special case $\lambda=-1$ are singled out:
$$
\psi_n(z,d\mu_\omega) := \phi_n^{\lambda = -1}(z,d\mu_\omega).
$$

Define
\begin{equation*}
u_n =\psi_n + G(z) \phi_n,\  u_n^* = -\psi_n^* + G(z) \phi_n^*.
\end{equation*}
Then, \cite[Theorem 3.2.11]{S04} shows that
$\left(
\begin{matrix}
u_n\\u_n^*
\end{matrix}
\right)$
is the unique solution of
$$
\Xi_n = T_n^z(\omega) \Xi_{0}
$$
with the initial condition
$$
\Xi_{0} = \left(
\begin{matrix}
1\\
1
\end{matrix}
\right).
$$
Define
$$
m^+_{\omega,f}(z) = \frac{u_1(z)}{u_0(z)}.
$$
Then, \cite[Theorem~10.11.6]{S04} shows that for fixed $z \in \mathbb{D} \backslash \{ 0 \}$, $\log |m_{\omega,f}^+(z)|$ is in $L^1(\Omega, d\beta)$ and
\begin{equation}\label{e.mandgamma}
\int_\Omega \log|m_{\omega,f}^+(z)| \, d\beta(\omega) = \log|z| - \gamma_f(z).
\end{equation}



\section{Generic Absence of Absolutely Continuous Spectrum}\label{s.3}

In this section we discuss the generic absence of absolutely continuous spectrum and seek to prove Theorem~\ref{t.main1}. In fact, we will prove a more general result, Theorem~\ref{t.main3} below, that also allows us to include a coupling constant. This has the added benefit that one realizes in this way that, for a generic choice of a sampling function $f$, one cannot force the existence of absolutely continuous spectrum even by multiplying the sequence of Verblunsky coefficients by an arbitrarily small positive constant.

For any arc $I \subseteq \partial\mathbb{D}$, we define
$$
M(f,I) := \Leb ( \{ z \in I : \gamma_f(z) = 0 \}).
$$

\begin{remark}\label{r.absenceofaccrit}
By the key result \eqref{e.kotanithm} from Kotani theory, it follows that $I$ almost surely contains no absolutely continuous spectrum if and only if $M(f,I) = 0$.
\end{remark}

\begin{lemma}{\label{AD05L1}}
Suppose $I \subset \partial \D$ is an arc of length $|I| \le \frac{\pi}{2}$. For every $0 < r < 1$, the maps
\begin{equation}{\label{AD05(1)}}
(L^1(\Omega) \cap B_r(L^\infty(\Omega)), \|\cdot\|_1)\rightarrow \mathbb{R},\qquad f \rightarrow M(f,I),
\end{equation}
and
\begin{equation}{\label{AD05(2)}}
(L^1(\Omega) \cap B_r(L^\infty(\Omega)),\|\cdot\|_1)\rightarrow \mathbb{R},\qquad f \rightarrow \int_0^1 M(\lambda f,I) \, d\lambda,
\end{equation}
are upper semi-continuous.
\end{lemma}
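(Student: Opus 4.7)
The plan is to first establish continuity of $f \mapsto \gamma_f(z)$ at interior points $z \in \D \setminus \{0\}$ via formula \eqref{e.mandgamma}, transport this to a lower-semicontinuity property on the arc $I \subseteq \partial \D$ using the superharmonic structure of $\gamma_f - \log|\cdot|$, and finally deduce upper semi-continuity of $M(f,I)$ by a reverse-Fatou argument. The hypothesis $|I| \le \pi/2$ lets us fix a single-valued branch of $\sqrt{z}$ on a complex neighborhood of $I$, so that the Szeg\H{o} cocycle $M^z$ extends holomorphically across $I$.

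For the interior continuity, fix $z \in \D \setminus \{0\}$. Given $f_n \to f$ in $L^1(\Omega,\beta)$ with $\|f_n\|_\infty \le r$, pass to a subsequence so that $f_n(\omega) \to f(\omega)$ for $\beta$-a.e.\ $\omega$. By $T$-invariance of $\beta$ and a countable diagonal extraction, one may further require $f_n(T^j\omega) \to f(T^j\omega)$ for every $j \in \Z$ at $\beta$-a.e.\ $\omega$. The uniform bound $\|f_n\|_\infty \le r < 1$ keeps $\rho_j \ge \sqrt{1-r^2}$, so the Szeg\H{o} recursion \eqref{eq2} and the definitions of $u_0, u_1$ give $m^+_{\omega,f_n}(z) \to m^+_{\omega,f}(z)$ pointwise $\beta$-a.e. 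Formula \eqref{e.mandgamma} together with the uniform bound $\gamma_{f_n}(z) \le \log \sup_\omega \|M^z(\omega)\| \le C(r, |z|)$ shows that $\{\log|m^+_{\omega,f_n}(z)|\}_n$ is uniformly integrable. Vitali convergence then yields $\gamma_{f_n}(z) \to \gamma_f(z)$ along the subsequence, and the standard subsequence-of-subsequence argument upgrades this to convergence along the full sequence.

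The boundary step is the main obstacle. For each $\omega$, $z \mapsto m^+_{\omega,f}(z)$ is analytic on $\D$, so $z \mapsto \log|m^+_{\omega,f}(z)|$ is subharmonic, and its $\beta$-integral is subharmonic on $\D \setminus \{0\}$; by \eqref{e.mandgamma}, $z \mapsto \gamma_f(z) - \log|z|$ is superharmonic there. By Fatou's theorem for superharmonic functions, $\gamma_f(z) = \lim_{r \to 1^-} \gamma_f(rz)$ for a.e.\ $z \in \partial \D$. The crucial analytic step is to combine this with the interior continuity above, using the Poisson-type integral representation for the superharmonic function $\gamma_f - \log|\cdot|$, to establish: for a.e.\ $z \in I$ with $\gamma_f(z) > 0$, one has $\gamma_{f_n}(z) > 0$ for all sufficiently large $n$. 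Equivalently, $\limsup_n \mathbf{1}_{\{\gamma_{f_n}=0\}}(z) \le \mathbf{1}_{\{\gamma_f=0\}}(z)$ a.e.\ on $I$.

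Granted this boundary semi-continuity, upper semi-continuity of \eqref{AD05(1)} follows from the reverse Fatou lemma with dominating constant $1$: $\limsup_n M(f_n,I) \le M(f,I)$. For \eqref{AD05(2)}, fix $\lambda \in [0,1]$; the scaling $f \mapsto \lambda f$ maps $L^1 \cap B_r(L^\infty)$ into itself, and if $f_n \to f$ in $L^1$ then $\lambda f_n \to \lambda f$ in $L^1$ with $\|\lambda f_n\|_\infty \le r$, so the first part gives $\limsup_n M(\lambda f_n, I) \le M(\lambda f, I)$ for every $\lambda$. A second application of reverse Fatou, now over $\lambda \in [0,1]$ with dominating constant $|I|$, yields $\limsup_n \int_0^1 M(\lambda f_n,I)\,d\lambda \le \int_0^1 M(\lambda f, I)\,d\lambda$.
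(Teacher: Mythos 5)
Your interior step (continuity of $f \mapsto \gamma_f(z)$ for $z \in \D \setminus \{0\}$ via \eqref{e.mandgamma} and Vitali/dominated convergence) and your final reverse-Fatou deduction both match what the paper does and are fine. But the entire difficulty of the lemma is concentrated in the passage from interior convergence to information on the boundary arc $I$, and there you have a genuine gap: the sentence beginning ``The crucial analytic step is to combine this \dots using the Poisson-type integral representation'' is an assertion, not a proof. The statement you want --- for a.e.\ $z \in I$ with $\gamma_f(z) > 0$ one has $\gamma_{f_n}(z) > 0$ for all large $n$ --- is essentially the lemma itself, and nothing in your write-up produces it. Note also that what one can actually extract is weaker (and still sufficient): convergence in measure of $\gamma_{f_n}$ to $\gamma_f$ on $I$ up to small exceptional sets, not the pointwise a.e.\ statement you claim.

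To close this gap the paper needs three ingredients you never supply or invoke. First, $\gamma_f$ is \emph{harmonic} (not merely the superharmonicity of $\gamma_f - \log|\cdot|$ you mention) and uniformly bounded on compact subsets of $\D \setminus \{0\}$; superharmonicity alone turns the mean-value identity into an inequality pointing the wrong way and the argument does not close. Second, one must apply the mean-value property at a point that is \emph{not} $0$ but on a region whose boundary contains $I$: the paper builds a conformal map $\Phi : \D \to U_2$ (Schwarz--Christoffel onto a triangle, then a M\"obius map) with $0 \notin \overline{U_2}$ and one ``side'' of $\partial U_2$ equal to $I$, so that $\gamma_{f_n}(\Phi(0))$ equals the average of $\gamma_{f_n} \circ \Phi$ over $\partial \D$, the contributions of the two interior sides converge by dominated convergence, and hence the integral over $I$ of $\gamma_{f_n} - \gamma_f$ tends to $0$. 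This is also where the hypothesis $|I| \le \frac{\pi}{2}$ enters --- it guarantees the triangle can be chosen with side length $\le 1$ so that $0$ stays outside $\overline{U_2}$ --- not, as you suggest, to fix a branch of $\sqrt{z}$. Third, one must combine this $L^1(I)$-type convergence with the pointwise upper semicontinuity $\limsup_n \gamma_{f_n}(z) \le \gamma_f(z)$ on $\partial \D$, which you never use; without it, convergence of the integral over $I$ says nothing about where $\gamma_{f_n}$ can vanish. As written, your proof establishes the easy bookends but omits the argument that makes the lemma true.
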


\begin{remark}\label{r.nozero}
This lemma is crucial to the proof of Theorems~\ref{t.main1} and \ref{t.main3}. Its proof is based on the mean-value property of harmonic functions. Recall that the assumption \eqref{t.main0b} was necessary for our dynamical setup, compare also \eqref{e.mandgamma}. The exclusion of zero, where one would normally center the application of the mean-value property, leads to some technical difficulties that need to be overcome in the proof of the lemma (as well as its formulation).
\end{remark}

\begin{proof}
The statement of the lemma is trivial if $|I| = 0$, so let us assume that $0 < |I| \le \frac{\pi}{2}$.

We begin by remarking that we work in the $L^\infty$ ball
$$
B_r(L^\infty(\Omega)) = \{ f : \Omega \to \D : \|f\|_\infty \le r \}
$$
for the fixed chosen value of $r \in (0,1)$, hence the condition \eqref{eq4} will always be satisfied, and the existence of the Lyapunov exponent is ensured in the settings of \eqref{AD05(1)} and \eqref{AD05(2)}. Other than that, all other statements and arguments are relative to the $L^1$ norm.

It suffices to show that \eqref{AD05(1)} is upper semi-continuous since this implies that \eqref{AD05(2)} is also upper semi-continuous by Fatou's lemma.

We have to show that if $f_n, f \in L^1(\Omega) \cap B_r(L^\infty(\Omega))$ and $f_n \rightarrow f$ with respect to the $L^1$ norm, then $\limsup M(f_n,I) \leq M(f,I)$.

Assume otherwise. Then there are $f_n, f \in L^1(\Omega) \cap B_r(L^\infty(\Omega))$ such that
\begin{enumerate}

\item $f_n \rightarrow f$ in $L^1$ and pointwise as $n\rightarrow\infty$,

\item $\liminf M(f_n,I) \geq M(f,I) + \varepsilon$ for some $\varepsilon > 0$.

\end{enumerate}

By (1), we have pointwise convergence of the $m^+$ functions $m^+_{\omega,f_n}$ in $\mathbb{D}$ for almost every $\omega \in \Omega$.

In view of the relationship \eqref{e.mandgamma} between $m^+_{\omega,f}$ and $\gamma_f(z)$, that is,
$$
\int_\Omega \log|m^+_{\omega,f}(z)| \, d\beta(\omega) = \log|z| - \gamma_f(z),
$$
the associated Lyapunov exponents $\gamma_{f_n}(z)$ convergence pointwise to $\gamma_f(z)$ in $\mathbb{D} \backslash \{ 0 \}$.

\medskip

We will consider the composition of two conformal maps. The variables will be called $\zeta,w,z$, with $z$ being the spectral parameter above, and the maps will be called $w = \Phi_1(\zeta)$ and $z = \Phi_2(w)$. The goal will be to avoid $0$ in the $z$-plane (cf.~Remark~\ref{r.nozero}), but to apply the mean-value property in the $\zeta$-plane centered at $0$.

\medskip

Consider, in the $w$-plane, the region $U_1$ in the upper half plane that is bounded by the equilateral triangle $T$ with vertices $A_1$, $A_2$, $A_3$ satisfying the following properties: the common side length $\ell$ is no more than $1$ (and it will be determined subject to this condition momentarily), and $A_1A_2$  is the base of the triangle and it has $(0,0)$ as its midpoint. For example, choosing $\ell$ maximal, $\ell = 1$, gives rise to the vertices $A_1 = -\frac{1}{2}$, $A_2 = \frac{1}{2}$, and $A_3 = \frac{\sqrt{3}}{2} i$ of $T$.

Consider the conformal mapping $\Phi_1$ from the unit disk $\D$ to $U_1$. By the Schwarz-Christoffel formula (see, e.g., \cite[Theorem 2.2]{DT02}),
\begin{equation}{\label{SCformula}}
\Phi_1(\zeta) = C_1 + C_2 \int^\zeta \prod_{k=1}^{3} \left( 1 - \frac{\xi}{\zeta_k} \right)^{-\frac{2}{3}} \, d\xi,
\end{equation}
where $C_1$ and $C_2$ are constants and $\zeta_1$, $\zeta_2$, $\zeta_3$ are the inverse images under $\Phi_1$ of the vertices $A_1, A_2, A_3$ of $T$.

Consider also the conformal mapping
\begin{equation}\label{e.Phi2formula}
\Phi_2(w) = e^{i \theta^{\prime}} \frac{w-i}{w+i},
\end{equation}
from $U_1$ to $U_2 := \Phi_2(U_1)$ with the parameters $\ell$ and $\theta'$ chosen so that
\begin{equation}\label{e.fixI}
\Phi_2 (A_1 A_2) = I.
\end{equation}
In order to see that these choices are possible, choose $\theta'$ so that $e^{-i \theta^{\prime}} I$ has $-1$ as its midpoint and then note that choosing the maximal value $1$ for $\ell$ would send the base of $T$ (the line segment connecting $A_1 = -\frac{1}{2}$ and $A_2 = \frac{1}{2}$) to the arc in $\partial \D$ with midpoint $-1$ and endpoints $- \frac35 \pm \frac45 i$. Since this arc has length more than $0.58 \pi$, and $I$ has length no more than $\frac{\pi}{2}$, we can obtain the desired $e^{-i \theta^{\prime}} I$ by lowering the value of $\ell$ suitably.

Denote the image of the side $A_2 A_3$ of $T$ under $\Phi_2$ by $J$ and the image of the side $A_3 A_1$ of $T$ under $\Phi_2$ by $K$. Thus the region $U_2$ is bounded by $T^{\prime}$ with sides $I$, $J$, $K$.

Let us emphasize an important point. Since $\ell \le 1$, we have that $i$ lies outside of $\overline{U_1}$, and consequently $0$ lies outside $\overline{U_2}$. This property is crucial in light of Remark~\ref{r.nozero}.

Composing the conformal maps $\Phi_1$ and $\Phi_2$, we have
$$
\Phi_2(\Phi_1(\zeta)) =: \Phi(\zeta) : \D \rightarrow U_2.
$$

The functions $\gamma_{f_n} \circ \Phi$ are harmonic and bounded in $\mathbb{D}$ (here we use the fact that was pointed out above: $\overline{U_2}$ does not contain $0$). This yields
$$
\gamma_{f_n} (\Phi(0)) = \frac{1}{2\pi} \int_0^{2\pi} \gamma_{f_n}(\Phi(e^{i\theta})) \, d\theta,
$$
and similarly for $\gamma_f$. Since $\gamma_{f_n}(\Phi(0)) \rightarrow \gamma_{f}(\Phi(0))$ as $n \rightarrow \infty$, we infer
$$
\frac{1}{2\pi} \int_0^{2\pi} [\gamma_{f_n}(\Phi(e^{i\theta})) - \gamma_{f}(\Phi(e^{i\theta}))] \, d\theta \rightarrow 0.
$$
Since the Lyapunov exponents $\gamma_{f_n}(z)$ converge pointwise to $\gamma_f(z)$ in $\mathbb{D} \backslash \{ 0 \}$, by dominated convergence, the integral along $J$ and $K$ goes to zero. Therefore, the integral along $I$ goes to zero.

Since these remarks pertain to the $z$-plane, let us reformulate this as a statement in the $\zeta$-plane, and especially in terms of the $\theta$-variable. Choose $\theta_1,\theta_2,\theta_3 \in [0,2\pi)$ so that $\zeta_j = e^{i\theta_j}$, $j = 1,2,3$.

The observation above therefore implies that
\begin{equation}\label{e.leusc1}
\frac{1}{2\pi} \int_{\theta_1}^{\theta_2} [\gamma_{f_n}(\Phi(e^{i\theta})) - \gamma_{f}(\Phi(e^{i\theta}))] \, d\theta \rightarrow 0.
\end{equation}

By upper semi-continuity of the Lyapunov exponent (i.e., $\limsup \gamma_{f_n}(z) \leq \gamma_f(z)$ for every $z$) and dominated convergence,
\begin{equation}\label{e.leusc2}
\frac{1}{2\pi} \int_{\theta_1}^{\theta_2} \max \{ \gamma_{f_n}(\Phi(e^{i\theta})) - \gamma_{f}(\Phi(e^{i\theta})), 0 \} \, d\theta \rightarrow 0.
\end{equation}
Combining \eqref{e.leusc1} and \eqref{e.leusc2}, we find
\begin{equation}\label{e.leusc3}
\frac{1}{2\pi} \int_{\theta_1}^{\theta_2} \min \{ \gamma_{f_n}(\Phi(e^{i\theta})) - \gamma_{f}(\Phi(e^{i\theta})), 0 \} \, d\theta \rightarrow 0.
\end{equation}

Choose $\delta > 0$ such that the set $X = \{ z \in I : \gamma_f(z) < \delta \}$ has measure bounded by $M(f,I) + \varepsilon/4$ with $\varepsilon$ from (2). Then
$$
\frac{1}{2\pi} \int_{\theta\in \Theta} \min \{ \gamma_{f_n}(\Phi(e^{i\theta})) - \gamma_{f}(\Phi(e^{i\theta})), 0 \} \, d\theta \rightarrow 0,
$$
where
$$
\Theta := \{ \theta \in[0,2\pi) : \Phi(e^{i\theta}) \in I \backslash X \}.
$$

Therefore, keeping the explicit formulas \eqref{SCformula} and \eqref{e.Phi2formula} for the change of variables in mind, it follows that for $n \geq n_0$ with $n_0$ sufficiently large, there exists a set $Y_n$ of measure bounded by $\varepsilon/4$ such that $\gamma_{f_n}(z) \geq \delta/2$ for every $z \in I \backslash (X \cup Y_n)$. Consequently, $\limsup M(f_n,I) \leq M(f,I) + \varepsilon/2$, which contradicts (2).
\end{proof}

\begin{lemma}{\label{AD05L3}}
Suppose $I \subset \partial \D$ is an arc of length $|I| \le \frac{\pi}{2}$. For $f \in C(\Omega,\D)$, $\varepsilon > 0$, $\delta > 0$, there exists $\tilde{f} \in C(\Omega,\D)$ such that $\| f - \tilde{f} \|_{\infty} < \varepsilon$, $M(\tilde{f},I) < \delta$, and $\int_{0}^1 M(\lambda \tilde{f},I) \, d\lambda < \delta$.
\end{lemma}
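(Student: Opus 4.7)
The plan is to deduce Lemma~\ref{AD05L3} from Lemma~\ref{AD05L1} in two steps: first construct a bounded measurable perturbation $\hat{f}$ of $f$ with $\|\hat{f}-f\|_\infty<\varepsilon/2$ satisfying $M(\hat{f},I)=0=\int_0^1 M(\lambda\hat{f},I)\,d\lambda$; then use Luzin's theorem to replace $\hat{f}$ by a continuous $\tilde{f}$ that is $L^\infty$-close to $f$ and $L^1$-close to $\hat{f}$. The upper semi-continuity in $L^1$ of both functionals, from Lemma~\ref{AD05L1}, will transfer the vanishing at $\hat{f}$ to the required $\delta$-bound at $\tilde{f}$. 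Throughout, I work in a fixed ball $B_r(L^\infty(\Omega))$ with $\|f\|_\infty<r<1$ (the strict inequality on the left using compactness of $\Omega$ and $f\in C(\Omega,\D)$), choosing $\varepsilon$ small enough that the later perturbations stay inside this ball.

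To build $\hat{f}$, I would use the non-atomicity of $\beta$ (the standing hypothesis of Theorem~\ref{t.main1}) together with Rokhlin's lemma to produce a measurable set $B\subseteq\Omega$ such that the indicator process $\{\chi_B(T^n\omega)\}_{n\in\Z}$ is non-deterministic; concretely, attach independent coin flips at the top level of a refining sequence of Rokhlin towers of heights $N_k\to\infty$, so that the value of $\chi_B(\omega)$ is not a $\beta$-a.s. measurable function of $\{\chi_B(T^{-n}\omega):n\ge 1\}$. Then set $\hat{f}(\omega):=f(\omega)+\varepsilon'\chi_B(\omega)$ for some $\varepsilon'\in(0,\varepsilon/2)$ small enough to keep $\hat{f}$ inside $B_r(L^\infty(\Omega))$. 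The Verblunsky sequence $\{\hat{f}(T^n\omega)\}$ inherits non-determinism from $\chi_B$, and by Kotani theory for extended CMV matrices (Appendix~\ref{a.appb}) non-determinism forces $\gamma_{\hat{f}}(z)>0$ for Lebesgue-a.e.\ $z\in\partial\D$, hence $M(\hat{f},I)=0$. Since scaling by a nonzero $\lambda$ preserves the past $\sigma$-algebra of the process, $\lambda\hat{f}$ remains non-deterministic for every $\lambda\in(0,1)$, so $M(\lambda\hat{f},I)=0$ for each such $\lambda$ and therefore $\int_0^1 M(\lambda\hat{f},I)\,d\lambda=0$ as well.

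Next, apply Luzin's theorem to the measurable function $g:=\hat{f}-f$, which satisfies $\|g\|_\infty\le\varepsilon/2$. This yields a continuous $\tilde{g}:\Omega\to\C$ with $\|\tilde{g}\|_\infty\le\varepsilon/2$ that agrees with $g$ outside a set of arbitrarily small $\beta$-measure, so that $\|\tilde{g}-g\|_1$ is as small as we wish. Set $\tilde{f}:=f+\tilde{g}$; then $\tilde{f}\in C(\Omega,\D)$ (since $\|f\|_\infty+\varepsilon/2<1$ after shrinking $\varepsilon$ if necessary), $\|\tilde{f}-f\|_\infty\le\varepsilon/2<\varepsilon$, and $\tilde{f}\to\hat{f}$ in $L^1$ as the Luzin error shrinks. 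Both $\|\hat{f}\|_\infty$ and $\|\tilde{f}\|_\infty$ lie below $r<1$, so Lemma~\ref{AD05L1} applies and yields $\limsup M(\tilde{f},I)\le M(\hat{f},I)=0$ and analogously for the coupling integral. Choosing the Luzin error sufficiently small therefore produces $M(\tilde{f},I)<\delta$ and $\int_0^1 M(\lambda\tilde{f},I)\,d\lambda<\delta$ simultaneously.

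The main obstacle is the measurable step: arranging that $\{\chi_B(T^n\omega)\}$ is genuinely non-deterministic while $\hat{f}$ stays $L^\infty$-close to $f$. A naive Rokhlin construction is deterministic on tower interiors, so authentic randomness must be injected at the top floors across a nested sequence of towers of diverging heights; only then does the Kotani non-determinism theorem from Appendix~\ref{a.appb} fire and deliver $\gamma_{\hat{f}}>0$ almost everywhere on $\partial\D$. Once this construction is in place, the remainder of the argument is a straightforward application of Luzin's theorem together with Lemma~\ref{AD05L1}.
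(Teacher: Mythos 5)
Your overall architecture matches the paper's: first produce a bounded measurable perturbation at which both functionals vanish, then approximate it by a continuous function in $L^1$ (while staying $L^\infty$-close to $f$ and inside a fixed ball $B_r(L^\infty(\Omega))$) and invoke the upper semi-continuity from Lemma~\ref{AD05L1}. The Luzin/Tietze step and the final limiting argument are fine and are essentially what the paper does via the construction of \cite[Lemma~3]{AD05}.

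However, there is a genuine gap in the measurable step, and it is the heart of the matter. You propose to build a set $B$ so that the process $\{\chi_B(T^n\omega)\}_{n\in\Z}$ is non-deterministic, by ``attaching independent coin flips'' at the tops of Rokhlin towers, and then to invoke the Kotani determinism theorem (positive-measure vanishing of $\gamma$ forces determinism). This cannot work in the stated generality: the indicator process $\chi_B(T^n\omega)$ is a two-valued stationary factor of $(\Omega,T,\beta)$, so its entropy is bounded by the entropy of the base system. If $(\Omega,T,\beta)$ has zero entropy --- e.g.\ an irrational rotation with Lebesgue measure, which is a central example covered by Theorem~\ref{t.main1} --- then \emph{every} such process is deterministic (its past determines its present almost surely), and no choice of $B$ produces the non-determinism you need. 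The coin flips must be measurable functions of $\omega$, so there is no independence to inject. This is precisely why the paper (following \cite{AD05}) takes a different route at this step: it approximates $f$ in $L^\infty$ by a finitely-valued function $s$ whose associated process is merely \emph{aperiodic} (such $s$ exist densely by \cite[Lemma~2]{AD05}, using non-atomicity of $\beta$), and then applies Theorem~\ref{th2} --- the finitely-valued Kotani theorem, which concludes $\Leb(\{z:\gamma_{\lambda s}(z)=0\})=0$ from aperiodicity alone, with no non-determinism required. Aperiodicity is achievable in zero-entropy systems; non-determinism is not. Replacing your Rokhlin construction and the determinism theorem by the finitely-valued aperiodic approximation and Theorem~\ref{th2} repairs the argument, after which your Luzin step and the appeal to Lemma~\ref{AD05L1} go through as you describe.
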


\begin{proof}
Given $f \in C(\Omega,\D)$, note that $\|f\|_\infty < 1$ since $\Omega$ is compact and $f$ is continuous. Place an $L^\infty$ ball $B$ of radius less than $1 - \|f\|_\infty$ around $f$ and carry out the subsequent steps entirely within this ball. This will ensure that the Lyapunov exponent exists for all sampling functions that appear.

With the dense subset $\mathfrak{L}$ of $L^\infty(\Omega,\mathbb{D})$ from \cite[Lemma~2]{AD05}, consisting of sampling functions taking finitely many values and for which the resulting process is not periodic (cf.~the assumptions of Theorem~\ref{th2}), we choose $s \in \mathfrak{L} \cap B$ such that $\|f - s\|_{\infty} < \varepsilon/2$. By Theorem \ref{th2}, we have $M(\lambda s,I) = 0$ for every $\lambda \in (0, 1)$. We choose the same construction as in \cite[Lemma~3]{AD05} to generate $f_n \in B$, for which we have, by Lemma~\ref{AD05L1}, $M(f_n,I)$, $\int_0^1 M(\lambda f_n,I) \, d\lambda \rightarrow 0$ as $n \rightarrow \infty$. By choosing $n$ large enough, we complete the proof.
\end{proof}

\begin{lemma}{\label{AD05T1}}
Suppose $I \subset \partial \D$ is an arc of length $|I| \le \frac{\pi}{2}$. Then there is a residual set of functions $f \in C(\Omega, \mathbb{D})$ such that $M(f,I) = 0$.
\end{lemma}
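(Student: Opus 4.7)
The strategy is the standard Baire category argument: exhibit the target set $\{f \in C(\Omega,\D) : M(f,I)=0\}$ as a countable intersection of open dense sets. For each $n \in \N$, define
$$
U_n \defeq \{ f \in C(\Omega,\D) : M(f,I) < 1/n \}.
$$
Since $M(f,I) = 0$ exactly when $f$ lies in every $U_n$, it suffices to prove that each $U_n$ is open and dense in $C(\Omega,\D)$ equipped with the sup norm.

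For openness, fix $f \in U_n$. Because $\Omega$ is compact and $f$ is continuous with values in $\D$, we have $\|f\|_\infty < 1$, so we may pick $r \in (0,1)$ with $\|f\|_\infty < r$ and a sup-norm neighborhood $V$ of $f$ entirely contained in $B_r(L^\infty(\Omega))$. On this neighborhood, sup-norm convergence implies $L^1$ convergence (since $\beta$ is a probability measure), so the upper semi-continuity of the map $f \mapsto M(f,I)$ in $L^1$ on $L^1(\Omega) \cap B_r(L^\infty(\Omega))$, furnished by Lemma~\ref{AD05L1}, transfers to upper semi-continuity on $V$ in the sup norm. Consequently $\{g \in V : M(g,I) < 1/n\}$ is a sup-norm neighborhood of $f$ inside $U_n$.

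For density, fix $f \in C(\Omega,\D)$ and $\varepsilon > 0$, and let $\delta = 1/n$. Lemma~\ref{AD05L3} produces some $\tilde f \in C(\Omega,\D)$ satisfying $\|f - \tilde f\|_\infty < \varepsilon$ and $M(\tilde f, I) < \delta = 1/n$, so $\tilde f \in U_n$. Hence $U_n$ meets every sup-norm ball in $C(\Omega,\D)$.

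Since $C(\Omega,\D)$ is an open subset of the complete metric space $C(\Omega,\overline{\D})$, the Baire category theorem applies, and
$$
\bigcap_{n \ge 1} U_n = \{ f \in C(\Omega,\D) : M(f,I) = 0 \}
$$
is residual. The only step requiring care is the transfer of the upper semi-continuity in Lemma~\ref{AD05L1} from the $L^1$ topology on an $L^\infty$ ball to the sup norm on $C(\Omega,\D)$; this is handled locally around each $f$ by bounding $\|f\|_\infty$ away from $1$, which is automatic by compactness of $\Omega$.
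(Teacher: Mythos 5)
Your proposal is correct and follows essentially the same route as the paper: write the target set as $\bigcap_n \{ f : M(f,I) < 1/n \}$, use Lemma~\ref{AD05L1} for openness and Lemma~\ref{AD05L3} for density, and conclude by Baire category. The only difference is that you spell out the transfer of upper semi-continuity from the $L^1$ topology to the sup norm, which the paper leaves implicit.
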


\begin{proof}
For $\delta > 0$, define
$$
M_{\delta}(I) = \{ f \in C(\Omega,\D) : M(f,I) < \delta \}.
$$
By Lemma~\ref{AD05L1}, $M_{\delta}(I)$ is open, and by Lemma~\ref{AD05L3}, $M_{\delta}(I)$ is dense. It follows that
$$
\{ f \in C(\Omega,\D) : M(f,I) = 0 \} = \bigcap_{n \ge 1} M_{\frac1n}(I)
$$
is residual.
\end{proof}

\begin{lemma}{\label{AD05T2}}
Suppose $I \subset \partial \D$ is an arc of length $|I| \le \frac{\pi}{2}$. Then there is a residual set of functions $f \in C(\Omega,\mathbb{D})$ such that $M(\lambda f,I) = 0$ for almost every $\lambda \in (0, 1]$.
\end{lemma}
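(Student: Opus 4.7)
The plan is to mimic the proof of Lemma~\ref{AD05T1}, now working with the integrated quantity from part \eqref{AD05(2)} of Lemma~\ref{AD05L1}. For $\delta > 0$, I would set
$$
\tilde{M}_\delta(I) = \left\{ f \in C(\Omega, \D) : \int_0^1 M(\lambda f, I) \, d\lambda < \delta \right\}.
$$
The claim will be that each $\tilde{M}_\delta(I)$ is open and dense in $C(\Omega, \D)$ equipped with the uniform norm, so that
$$
\bigcap_{n \ge 1} \tilde{M}_{1/n}(I) = \left\{ f \in C(\Omega, \D) : \int_0^1 M(\lambda f, I) \, d\lambda = 0 \right\}
$$
is residual by the Baire category theorem.

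For openness, I would invoke the upper semi-continuity of \eqref{AD05(2)} in Lemma~\ref{AD05L1}. Given $f \in \tilde{M}_\delta(I)$, compactness of $\Omega$ and continuity of $f$ give $\|f\|_\infty < 1$, so for any $r \in (\|f\|_\infty, 1)$ a sufficiently small $\|\cdot\|_\infty$-ball around $f$ is contained in $B_r(L^\infty(\Omega))$. Since $\beta$ is a probability measure, $\|\cdot\|_\infty$-convergence on this ball forces $\|\cdot\|_1$-convergence, and the semi-continuity in the $L^1$-norm from Lemma~\ref{AD05L1} yields the desired openness in $C(\Omega,\D)$. Density is immediate from Lemma~\ref{AD05L3}, which produces, for any $f \in C(\Omega, \D)$ and $\varepsilon > 0$, a function $\tilde{f} \in C(\Omega, \D)$ with $\|f - \tilde{f}\|_\infty < \varepsilon$ and $\int_0^1 M(\lambda \tilde{f}, I) \, d\lambda < \delta$.

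Finally, I would translate the integral statement into the pointwise one required by the lemma. Since $M(\lambda f, I) \ge 0$ for every $\lambda$, the condition $\int_0^1 M(\lambda f, I) \, d\lambda = 0$ is equivalent to $M(\lambda f, I) = 0$ for Lebesgue-almost every $\lambda \in (0, 1)$, and since $\{1\}$ has Lebesgue measure zero, the same holds on $(0,1]$.

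I do not foresee a real obstacle here, as the two key ingredients (the upper semi-continuity of the integrated quantity, and the simultaneous density statement in Lemma~\ref{AD05L3}) were arranged precisely so that the parallel argument to Lemma~\ref{AD05T1} goes through; the only care needed is in transferring the $L^1$-upper semi-continuity to upper semi-continuity with respect to the uniform norm on $C(\Omega,\D)$, which is a routine embedding observation.
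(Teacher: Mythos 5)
Your proposal is correct and follows essentially the same route as the paper: define the sets $\{f : \int_0^1 M(\lambda f, I)\,d\lambda < \delta\}$, get openness from part \eqref{AD05(2)} of Lemma~\ref{AD05L1} and density from Lemma~\ref{AD05L3}, intersect over $\delta = 1/n$, and use nonnegativity of $M$ to pass from the vanishing integral to the almost-everywhere pointwise statement. Your extra remark on transferring the $L^1$-upper semi-continuity to the uniform norm is a correct (and slightly more careful) rendering of a step the paper leaves implicit.
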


\begin{proof}
For $\delta > 0$, define
$$
M_{\delta} = \{ f \in C(\Omega,\D) : \int_0^1 M(\lambda f, I) \, d\lambda < \delta \}.
$$
By Lemma~\ref{AD05L1}, $M_{\delta}$ is open, and by Lemma~\ref{AD05L3}, $M_{\delta}$ is dense. Thus,
$$
\bigcap_{\delta > 0}  M_{\delta}
$$
is residual. It follows that for Baire generic $f \in C(\Omega,\D)$, we have $M(\lambda f,I) = 0$ for almost every $\lambda \in (0, 1)$.
\end{proof}

\begin{theorem}\label{t.main3}
The sets
$$
\{ f \in C(\Omega,\mathbb{D}) : \Sigma_\mathrm{ac}(f) = \emptyset \}
$$
and
$$
\{ f \in C(\Omega,\mathbb{D}) : \Sigma_\mathrm{ac}(\lambda f) = \emptyset \text{ for Lebesgue almost every }
0 < \lambda \le 1 \}
$$
are residual.
\end{theorem}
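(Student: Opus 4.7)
The plan is to reduce the global statement about $\Sigma_\mathrm{ac}$ to a finite collection of local statements about individual arcs, and then invoke Lemmas~\ref{AD05T1} and \ref{AD05T2}. Since each arc treated in those lemmas must have length at most $\pi/2$, I will first fix a finite cover $\partial \D = I_1 \cup \cdots \cup I_N$ where each $I_k$ is a closed arc with $|I_k| \le \pi/2$ (for instance, take $N = 4$ congruent arcs). Combined with Remark~\ref{r.absenceofaccrit} and the Kotani identity \eqref{e.kotanithm}, the absence of absolutely continuous spectrum is equivalent to $M(f,I_k) = 0$ for every $k$, so the global condition decomposes into finitely many local ones.

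For the first assertion, I would apply Lemma~\ref{AD05T1} to each $I_k$ to obtain a residual set
\[
R_k := \{ f \in C(\Omega,\D) : M(f,I_k) = 0 \}.
\]
Since $C(\Omega,\D)$ is a Baire space (it is an open subset of the Banach space $C(\Omega,\C)$ equipped with the supremum metric), the finite intersection $R := \bigcap_{k=1}^N R_k$ is residual. For any $f \in R$ one has $M(f,I_k) = 0$ for all $k$, hence, summing and applying Remark~\ref{r.absenceofaccrit} arc by arc, $\Sigma_\mathrm{ac}(f) \cap I_k$ is empty (almost surely) for each $k$, so $\Sigma_\mathrm{ac}(f) = \emptyset$.

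For the second assertion, I would analogously apply Lemma~\ref{AD05T2} to each $I_k$ to obtain a residual set
\[
\tilde R_k := \{ f \in C(\Omega,\D) : M(\lambda f,I_k) = 0 \text{ for a.e.\ } \lambda \in (0,1] \},
\]
and set $\tilde R := \bigcap_{k=1}^N \tilde R_k$. For $f \in \tilde R$, each $k$ carries an exceptional set $N_k \subset (0,1]$ of Lebesgue measure zero on which $M(\lambda f, I_k) > 0$; since $N := \bigcup_{k=1}^N N_k$ is still a Lebesgue null set, every $\lambda \in (0,1] \setminus N$ satisfies $M(\lambda f, I_k) = 0$ for all $k$ simultaneously, and hence $\Sigma_\mathrm{ac}(\lambda f) = \emptyset$ by the same arc-by-arc argument as before.

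There is not really a deep obstacle here beyond confirming that the reduction to finitely many arcs is legitimate; the substantive analytic work has been carried out in Lemmas~\ref{AD05L1}--\ref{AD05T2}. The only subtlety worth checking is that $C(\Omega,\D)$ (rather than $C(\Omega,\overline{\D})$) is a Baire space so that finite intersections of residual subsets remain residual, and that Theorem~\ref{t.main1} follows immediately from the first assertion of Theorem~\ref{t.main3} (which is its strict strengthening).
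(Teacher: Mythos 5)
Your proposal is correct and follows essentially the same route as the paper: cover $\partial\D$ by four closed arcs of length $\pi/2$, reduce via Remark~\ref{r.absenceofaccrit} to the arc-wise statements of Lemmas~\ref{AD05T1} and \ref{AD05T2}, and intersect finitely many residual sets. Your extra care with the union of the exceptional null sets of $\lambda$ and with $C(\Omega,\D)$ being a Baire space is a welcome (and correct) elaboration of details the paper leaves implicit.
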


\begin{proof}
Pick closed arcs $I_1, I_2,I_3, I_{4} \subset \partial \mathbb{D}$ of length $\frac{\pi}{2}$ such that
$$
\bigcup_{j=1}^4 I_{j} = \partial \mathbb{D}.
$$
By Remark~\ref{r.absenceofaccrit}, we have
\begin{equation*}
\{ f \in C(\Omega,\mathbb{D}) : \Sigma_\mathrm{ac}(f) = \emptyset \} = \bigcap_{j=1}^4 \{ f \in C(\Omega,\mathbb{D}) : M(f,I_j) = 0 \}
\end{equation*}
and
\begin{align*}
\{ & f \in C(\Omega,\mathbb{D})  : \Sigma_\mathrm{ac}(\lambda f) = \emptyset \text{ for Lebesgue a.e. }
0 < \lambda \le 1 \} \\
& = \bigcap_{j=1}^4 \{ f \in C(\Omega,\mathbb{D}) : M(\lambda f,I_j) = 0 \text{ for Lebesgue a.e. }
0 < \lambda \le 1 \}.
\end{align*}

By Lemmas~\ref{AD05T1} and \ref{AD05T2}, the sets on the right-hand side are residual. The theorem now follows since the intersection of a finite number of residual sets is also a residual set.
\end{proof}

\begin{proof}[Proof of Theorem~\ref{t.main1}]
The first statement in Theorem~\ref{t.main3} is precisely the assertion of Theorem~\ref{t.main1} and hence the latter theorem is proved.
\end{proof}

\section{Generic Zero-Measure Spectrum}\label{s.4}

In this short section we note that the map
\begin{equation}\label{e.msigmadef}
M_\Sigma: C(\Omega,\D) \to [0,\infty), \quad f \mapsto \Leb(\Sigma_f)
\end{equation}
is upper semi-continuous. The proof uses variations of ideas developed in \cite{DFL17} in the context of continuum limit-periodic Schr\"odinger operators. This semi-continuity result will then imply that $\{ f \in C(\Omega,\D) : \Leb(\Sigma_f) = 0 \}$ is a $G_\delta$ set, and hence it reduces the desired genericity statement formulated in Theorem~\ref{t.main2} to a proof that it is dense. In the case of subshifts satisfying the Boshernitzan condition, the latter property is known.

\begin{prop}\label{p.semicont}
The map $M_\Sigma$ defined in \eqref{e.msigmadef} is upper semi-continuous, that is, for every $\delta > 0$, we have that $M_\Sigma(\delta) := \{ f \in C(\Omega,\D) : \Leb(\Sigma_f) < \delta \}$ is open.
\end{prop}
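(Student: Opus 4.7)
The plan is to deduce upper semi-continuity of $M_\Sigma$ from two ingredients: (i) continuity of the map $f \mapsto \Sigma_f$ from $(C(\Omega,\D),\|\cdot\|_\infty)$ into the space of compact subsets of $\partial\D$ with the Hausdorff metric, and (ii) outer regularity of Lebesgue measure on $\partial\D$. Granted these, openness of $M_\Sigma(\delta)$ is immediate: if $\Leb(\Sigma_f) < \delta$, outer regularity produces an open $U \supset \Sigma_f$ with $\Leb(U) < \delta$, and (i) supplies a $C(\Omega,\D)$-neighborhood of $f$ on which $\Sigma_g \subset U$, so that $\Leb(\Sigma_g) \le \Leb(U) < \delta$ throughout that neighborhood.

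The substantive step is (i). For each fixed $\omega \in \Omega$, I would first show that the map $f \mapsto \mathcal{E}^f_\omega$ is continuous from $C(\Omega,\D)$ with the uniform norm into the bounded operators on $\ell^2(\Z)$ with the operator norm. Since $\Omega$ is compact and $f : \Omega \to \D$ is continuous, one has $\|f\|_\infty < 1$, so one may restrict attention to a closed uniform ball $\{ g \in C(\Omega,\D) : \|g\|_\infty \le r \}$ with $r \in (\|f\|_\infty, 1)$. On this ball $\alpha \mapsto \rho = \sqrt{1-|\alpha|^2}$ is Lipschitz, so uniform convergence $f_n \to f$ produces entrywise uniform convergence of the CMV matrices $\mathcal{E}^{f_n}_\omega$ to $\mathcal{E}^f_\omega$. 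Because the CMV matrix has uniformly bounded bandwidth (at most five nonzero entries per row), entrywise uniform convergence upgrades to operator-norm convergence. The standard fact that operator-norm convergence of unitary operators implies Hausdorff convergence of their spectra then yields $\sigma(\mathcal{E}^{f_n}_\omega) \to \sigma(\mathcal{E}^f_\omega)$ in the Hausdorff metric. Under the minimality hypothesis that is in force in the setting of Theorem~\ref{t.main2} (minimality being part of the Boshernitzan condition), Theorem~\ref{t.main0a} identifies $\sigma(\mathcal{E}^g_\omega)$ with $\Sigma_g$ for every $g \in C(\Omega,\D)$ and every $\omega$, so that the preceding Hausdorff convergence is in fact Hausdorff convergence $\Sigma_{f_n} \to \Sigma_f$, establishing (i).

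The only point requiring any real attention is the Lipschitz control of $\alpha \mapsto \sqrt{1-|\alpha|^2}$, which degenerates as $|\alpha| \to 1$; this is the sole obstacle, and it is handled cleanly by the uniform bound $\|f\|_\infty < 1$ coming from compactness of $\Omega$, a bound that persists in a sup-norm neighborhood of $f$. Beyond that, the argument is a routine perturbation-theoretic and measure-theoretic assembly, consistent with the paper's framing of the result as a variation on the ideas of \cite{DFL17}; the approach through Hausdorff-continuity of $\Sigma_\bullet$ is preferable to a density-of-states route, since weak convergence of density-of-states measures does not in general control the Lebesgue measure of their topological supports.
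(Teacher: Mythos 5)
Your proposal is correct and follows essentially the same route as the paper's proof: operator-norm continuity of $f \mapsto \mathcal{E}_\omega^f$, Hausdorff continuity of the spectrum of a unitary operator under norm perturbations (the paper's Lemma~\ref{l.linftypert}), and outer regularity of Lebesgue measure to conclude. The only cosmetic difference is that you establish the operator-norm estimate by a local Lipschitz argument on a ball $\|g\|_\infty \le r < 1$ together with the banded structure, whereas the paper cites the uniform H\"older-$\tfrac12$ bound $\| \mathcal{E}_{\alpha} - \mathcal{E}_{\alpha'} \| \le 6 \sqrt{2} \|\alpha - \alpha'\|_\infty^{1/2}$ of Lemma~\ref{l.opnormdiff}, and the paper replaces your general open set $U$ by a finite union of arcs with explicit measure bookkeeping; both variants suffice.
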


Before proving Proposition~\ref{p.semicont}, we need to recall two lemmas from \cite{FOV18} and \cite{S04}. To state them, let us recall some basic notions.

For a subset $E \subseteq \partial \D$ and $\varepsilon > 0$, we denote
$$
B_\varepsilon(E) := \left\{ z \in \partial \D : \inf_{x \in E} |z - x| < \varepsilon \right\}.
$$
The \emph{Hausdorff distance} between two compact sets $F, K \subseteq \partial \D$ is defined by
$$
d_\mathrm{H}(F, K) := \inf \{ \varepsilon > 0 : F \subseteq B_\varepsilon(K) \text{ and } K \subseteq B_\varepsilon(F) \}.
$$

The following is \cite[Lemma~3.1]{FOV18}:

\begin{lemma}\label{l.linftypert}
For any pair of unitary operators $U, V$ on $\ell^2$, we have
\begin{equation}\label{e.distspec}
d_\mathrm{H}(\sigma(U), \sigma(V)) \le \|U - V\|,
\end{equation}
where $\| \cdot \|$ denotes the usual operator norm.
\end{lemma}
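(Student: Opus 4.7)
The plan is to prove the symmetric statement that every $\lambda \in \sigma(U)$ lies within $\|U-V\|$ of $\sigma(V)$, and then swap the roles of $U$ and $V$. Thus it suffices to show that whenever $z \in \partial\D$ satisfies $\dist(z, \sigma(V)) > \|U - V\|$, we must have $z \notin \sigma(U)$.

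First I would recall the standard spectral-theoretic fact that for a normal operator $N$ on a Hilbert space and any $z \notin \sigma(N)$,
\begin{equation*}
\bigl\| (N - z)^{-1} \bigr\| \;=\; \frac{1}{\dist(z, \sigma(N))}.
\end{equation*}
This is immediate from the continuous functional calculus applied to $N$ (and it applies to $V$, since unitaries are normal). With this in hand, fix $z$ with $\dist(z,\sigma(V)) > \|U-V\|$. Then $z \notin \sigma(V)$ and
\begin{equation*}
\bigl\| (V-z)^{-1} (U - V) \bigr\| \;\le\; \frac{\|U-V\|}{\dist(z,\sigma(V))} \;<\; 1.
\end{equation*}

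Next I would write
\begin{equation*}
U - z \;=\; (V - z) + (U - V) \;=\; (V - z)\bigl[I + (V-z)^{-1}(U-V)\bigr],
\end{equation*}
and observe that the bracketed factor is invertible by the Neumann series, while $V - z$ is invertible by the choice of $z$. Hence $U - z$ is invertible, so $z \notin \sigma(U)$. Contrapositively, every $\lambda \in \sigma(U)$ satisfies $\dist(\lambda, \sigma(V)) \le \|U - V\|$, which means $\sigma(U) \subseteq B_{\|U-V\|+\epsilon}(\sigma(V))$ for every $\epsilon > 0$, and in fact $\sigma(U) \subseteq \overline{B_{\|U-V\|}(\sigma(V))}$. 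Interchanging $U$ and $V$ yields the reverse inclusion, and then \eqref{e.distspec} follows directly from the definition of $d_\mathrm{H}$.

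There is essentially no obstacle here; the only point that requires any care is invoking normality of $V$ to identify the resolvent norm with the reciprocal distance to the spectrum, since for a general (non-normal) bounded operator one only has the inequality $\|(V-z)^{-1}\| \ge 1/\dist(z,\sigma(V))$, which would go the wrong way. Fortunately unitaries are normal, so this step is free, and the rest is a textbook Neumann series perturbation argument.
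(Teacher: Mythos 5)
Your proof is correct: the identity $\|(V-z)^{-1}\| = 1/\dist(z,\sigma(V))$ for the normal (unitary) operator $V$ combined with the Neumann series factorization of $U-z$ is exactly the standard argument, and the passage from ``every point of $\sigma(U)$ is within $\|U-V\|$ of $\sigma(V)$'' (plus symmetry) to the bound on $d_\mathrm{H}$ is compatible with the paper's definition of $B_\varepsilon$ and the infimum defining the Hausdorff distance. The paper itself gives no proof --- it simply cites \cite[Lemma~3.1]{FOV18} --- and your argument is essentially the one found there, so there is nothing to add.
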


If $U$ and $V$ are CMV matrices, the operator norm of their difference can be estimates in terms of the $\| \cdot \|_\infty$ norm of the difference of the respective Verblunsky coefficients. Namely, the estimate (4.3.13) in \cite[Theorem~4.3.3]{S04} reads as follows:\footnote{Actually, the estimate (4.3.13) in \cite[Theorem~4.3.3]{S04} is stated for CMV matrices, while we need the estimate for extended CMV matrices. However, the proof for the case we need can be given along similar lines.}

\begin{lemma}\label{l.opnormdiff}
Consider sequences $\{ \alpha_n \}_{n \in \Z}$ and $\{ \alpha_n' \}_{n \in \Z}$ of Verblunsky coefficients and the associated extended CMV matrices $\mathcal{E}_{\alpha}$ and $\mathcal{E}_{\alpha'}$. Then,
\begin{equation}\label{e.distops}
\| \mathcal{E}_{\alpha} - \mathcal{E}_{\alpha'} \| \le 6 \sqrt{2} \|\alpha - \alpha'\|_\infty^{1/2}.
\end{equation}
\end{lemma}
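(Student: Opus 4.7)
The plan is to adapt the one-sided argument underlying \cite[Theorem~4.3.3]{S04} to the two-sided setting by working with the LM factorization of the extended CMV matrix. One writes $\mathcal{E}_\alpha = \mathcal{L}_\alpha \mathcal{M}_\alpha$, where
$$
\mathcal{L}_\alpha = \bigoplus_{k \in \Z} \Theta(\alpha_{2k}), \qquad \mathcal{M}_\alpha = \bigoplus_{k \in \Z} \Theta(\alpha_{2k+1})
$$
are block-diagonal unitaries on $\ell^2(\Z)$, built from the $2 \times 2$ unitary blocks
$$
\Theta(\alpha) = \begin{pmatrix} \overline{\alpha} & \rho \\ \rho & -\alpha \end{pmatrix}, \qquad \rho = (1-|\alpha|^2)^{1/2},
$$
with $\Theta(\alpha_{2k})$ occupying positions $2k, 2k+1$ and $\Theta(\alpha_{2k+1})$ occupying positions $2k+1, 2k+2$. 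A direct inspection of the displayed entries of $\mathcal{E}$ in the introduction confirms this factorization, which is cleaner than its one-sided counterpart in that no exceptional $1 \times 1$ block appears at the origin.

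The core step is to control the norm of a single perturbed block. Writing $\rho' = (1-|\alpha'|^2)^{1/2}$, the identity $(\rho - \rho')(\rho + \rho') = |\alpha'|^2 - |\alpha|^2$, together with the crude bound $\bigl||\alpha|^2 - |\alpha'|^2\bigr| \le 2|\alpha - \alpha'|$ and the trivial inequality $|\rho - \rho'| \le \rho + \rho'$, yields $|\rho - \rho'|^2 \le 2|\alpha - \alpha'|$. Combining this with $|\alpha - \alpha'|^2 \le 2|\alpha - \alpha'|$ (since $\D$ has diameter $2$) and estimating the operator norm of the $2 \times 2$ block by its Frobenius norm gives
$$
\|\Theta(\alpha) - \Theta(\alpha')\| \le \bigl(2|\alpha - \alpha'|^2 + 2|\rho - \rho'|^2\bigr)^{1/2} \le 2\sqrt{2}\,|\alpha - \alpha'|^{1/2}.
$$
This is the only place where the square-root Hölder exponent enters, reflecting the non-smooth dependence of $\rho$ on $\alpha$ near $\partial \D$.

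Since $\mathcal{L}_\alpha - \mathcal{L}_{\alpha'}$ and $\mathcal{M}_\alpha - \mathcal{M}_{\alpha'}$ are themselves block diagonal, their operator norms equal the suprema of the corresponding block norms, so each is at most $2\sqrt{2}\,\|\alpha - \alpha'\|_\infty^{1/2}$. The telescoping identity
$$
\mathcal{E}_\alpha - \mathcal{E}_{\alpha'} = (\mathcal{L}_\alpha - \mathcal{L}_{\alpha'})\mathcal{M}_\alpha + \mathcal{L}_{\alpha'}(\mathcal{M}_\alpha - \mathcal{M}_{\alpha'}),
$$
unitarity of the remaining factors, and the triangle inequality then close the argument with a bound of the form $C\,\|\alpha - \alpha'\|_\infty^{1/2}$. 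The scheme above delivers $C = 4\sqrt{2}$, comfortably within the claimed $6\sqrt{2}$; any slack is consistent with the more conservative per-entry estimates typically chosen in \cite{S04} that one would mirror in the adaptation. I do not foresee a genuine obstacle: the sole point requiring attention is a careful verification of the LM factorization conventions for the two-sided matrix, which is purely mechanical.
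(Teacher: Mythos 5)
Your proof is correct and follows essentially the same route the paper relies on: it simply carries out, in the two-sided setting, the $\mathcal{L}\mathcal{M}$-factorization argument behind estimate (4.3.13) of \cite[Theorem~4.3.3]{S04}, which the paper cites without detail. The block bound $\|\Theta(\alpha)-\Theta(\alpha')\|\le 2\sqrt{2}\,|\alpha-\alpha'|^{1/2}$, the identification of the operator norm of a block-diagonal difference with the supremum of block norms, and the telescoping step are all valid, and the resulting constant $4\sqrt{2}$ indeed implies the stated $6\sqrt{2}$ bound.
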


\begin{proof}[Proof of Proposition~\ref{p.semicont}]
Let $\delta > 0$ be given, and let us consider $f \in M_\Sigma(\delta)$. We have to show that there exists $\varepsilon > 0$ such that every $g \in C(\Omega,\D)$ with $\|f - g\|_\infty < \varepsilon$ belongs to $M_\Sigma(\delta)$ as well.

By assumption, we have $\varepsilon' := \delta - \Leb(\Sigma_f) > 0$. By basic properties of the Lebesgue measure, we can choose finitely many open arcs $I_1, \ldots, I_m \subset \partial \D$ with
$$
\Sigma_f \subset \bigcup_{j = 1}^m I_j \quad \text{and} \quad \sum_{j = 1}^m |I_j| < \Leb(\Sigma_f) + \frac{\varepsilon'}{2}.
$$

Let us set
$$
\tilde \varepsilon := \frac{\varepsilon'}{8m} > 0 \quad \text{ and } \quad \varepsilon := \left( \frac{\tilde \varepsilon}{6 \sqrt{2}} \right)^2 > 0.
$$
By \eqref{e.distspec} and \eqref{e.distops}, if $\|f - g\|_\infty < \varepsilon$, then $\Sigma_g \subset B_{\tilde \varepsilon}(\Sigma_f)$.

Putting these two ingredients together, we obtain
$$
\Sigma_g \subset B_{\tilde \varepsilon} \left( \bigcup_{j = 1}^m I_j \right),
$$
and hence
\begin{align*}
\Leb(\Sigma_g) & \le \Leb \left( B_{\tilde \varepsilon} \left( \bigcup_{j = 1}^m I_j \right) \right) \\
& \le 4m \tilde \varepsilon + \sum_{j = 1}^m |I_j| \\
& < 4m \tilde \varepsilon + \Leb(\Sigma_f) + \frac{\varepsilon'}{2} \\
& = \delta,
\end{align*}
as desired. This completes the proof.
\end{proof}

\begin{coro}\label{c.zeromeasspecgdelta}
We have that $\{ f \in C(\Omega,\R) : \Leb(\Sigma(f)) = 0 \}$ is a $G_\delta$ set.
\end{coro}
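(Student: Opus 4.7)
The plan is to realize the set in question as a countable intersection of open sets, exploiting the upper semi-continuity already established in Proposition~\ref{p.semicont}. The key observation is that the vanishing condition $\Leb(\Sigma(f)) = 0$ is equivalent to the family of strict inequalities $\Leb(\Sigma(f)) < 1/n$ holding simultaneously for every positive integer $n$.

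Concretely, I would write
\begin{equation*}
\{ f \in C(\Omega,\D) : \Leb(\Sigma(f)) = 0 \} = \bigcap_{n \ge 1} M_\Sigma(1/n),
\end{equation*}
where $M_\Sigma(\delta) = \{ f \in C(\Omega,\D) : \Leb(\Sigma_f) < \delta \}$ is the sublevel set from Proposition~\ref{p.semicont}. The inclusion $\subseteq$ is immediate since $0 < 1/n$ for every $n$, and the reverse inclusion follows because if $f$ belongs to every $M_\Sigma(1/n)$ then $\Leb(\Sigma_f) < 1/n$ for all $n$, forcing $\Leb(\Sigma_f) = 0$.

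By Proposition~\ref{p.semicont}, each $M_\Sigma(1/n)$ is open in $C(\Omega,\D)$, so the displayed intersection is a $G_\delta$ set by definition. (Note that the statement of the corollary writes $C(\Omega,\R)$, but in light of the setup and the dependence of $\Sigma(f)$ on a $\D$-valued sampling function, this is read as $C(\Omega,\D)$.) Since the substantive technical input has already been absorbed into the proof of Proposition~\ref{p.semicont} -- namely the Hausdorff-distance bound of Lemma~\ref{l.linftypert} combined with the operator-norm estimate of Lemma~\ref{l.opnormdiff} -- there is no genuine obstacle remaining at this step: the corollary is a short formal consequence, and no further ideas are needed to complete it.
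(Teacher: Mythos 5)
Your proof is correct and is essentially identical to the paper's: both write the set as $\bigcap_{n \ge 1} M_\Sigma(1/n)$ and invoke the openness of each $M_\Sigma(1/n)$ from Proposition~\ref{p.semicont}. Your parenthetical reading of $C(\Omega,\R)$ as $C(\Omega,\D)$ is also the right interpretation of what is evidently a typo in the statement.
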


\begin{proof}
Simply write
$$
\{ f \in C(\Omega,\R) : \Leb(\Sigma(f)) = 0 \} = \bigcap_{n \in \N} M_\Sigma \left( \tfrac1n \right)
$$
and use the fact that each $M_\Sigma(1/n)$ is open by Proposition~\ref{p.semicont}.
\end{proof}

\begin{proof}[Proof of Theorem~\ref{t.main2}]
By Corollary~\ref{c.zeromeasspecgdelta}, the set $\{ f \in C(\Omega,\R) : \Leb(\Sigma(f)) = 0 \}$ is a $G_\delta$ set, and by \cite{DL07} it is also dense under the assumptions of the theorem. Indeed, since $\Omega$ is an aperiodic subshift satisfying the Boshernitzan condition (B), \cite{DL07} shows that $\Leb(\Sigma(f)) = 0$ holds for every locally constant $f : \Omega \to \D$ (recall that $f$ is locally constant if there is an $N \in \Z_+$ such that $f(\omega)$ is determined by $\omega_{-N}, \ldots, \omega_N$). The set of locally constant $f$'s is dense in $C(\Omega,\D)$, and hence the result follows.
\end{proof}

\begin{appendix}

\section{Invariance of the Spectrum and Spectral Type}\label{s.a}

In this appendix we discuss Theorems~\ref{t.main0a} and \ref{t.main0b}. As pointed out earlier, these statements are central, essentially known to experts, but not available in the literature. For this reason we provide some comments on how they are obtained.

We remark that a related discussion can be found in \cite[Section~3]{GT94}. However, the authors of \cite{GT94} do not consider extended CMV matrices (which had not yet been introduced) and hence they do not directly discuss Theorems~\ref{t.main0a} and \ref{t.main0b}.

\subsection{A Brief Discussion of Theorem~\ref{t.main0a}}

In this subsection we briefly sketch the proof of Theorem~\ref{t.main0a}. As was mentioned in the introduction, the proof follows quickly from the proof of \cite[Theorem~10.9.11]{S05}. Let us explain why this is the case.

By symmetry it suffices to show that for every pair $\omega_1$, $\omega_2 \in \Omega$, $\sigma(\mathcal{E}_{\omega_1}) \subseteq \sigma(\mathcal{E}_{\omega_2})$. By using minimality, there is a sequence $\{n_j\}_{j \geq 1}$ such that $T^{n_j} \omega_2 \rightarrow \omega_1$ as $j \rightarrow \infty$. Due to the continuity of the sampling function $f$, $\mathcal{E}_{T^{n_j}\omega_2}$ converges strongly to $\mathcal{E}_{\omega_1}$. Thus,
$$
\sigma(\mathcal{E}_{\omega_1}) \subseteq \overline{\bigcup_{j \geq 1} \sigma(\mathcal{E}_{T^{n_j} \omega_2})} = \sigma(\mathcal{E}_{\omega_2}).
$$

As for the absolutely continuous part, since up to a finite-rank perturbation, $\mathcal{E}_{\omega}$ can be viewed as the direct sum of two half-line CMV matrices, $\mathcal{C}^{+}_{\omega}$ and $\mathcal{C}^{-}_{\omega}$, we have by the invariance of the absolutely continuous spectrum of a unitary operator under trace class perturbations \cite{C74, CP76},\footnote{The trace class perturbation theory is generally discussed primarily in the self-adjoint case, but as pointed out in the two papers mentioned here, the unitary analog follows via an application of the Cayley transform.}
\begin{equation}\label{e.acwlhl}
\sigma_{\mathrm{ac}}(\mathcal{E}_{\omega}) = \sigma_{\mathrm{ac}}(\mathcal{C}_{\omega}^{+}) \cup \sigma_{\mathrm{ac}}(\mathcal{C}_{\omega}^{-}).
\end{equation}
For the right half-line CMV matrix, Theorem~10.9.11 in \cite{S05} shows that there is a set $\Sigma_{\mathrm{ac}}^+ \subseteq \partial \D$ such that
$$
\sigma_{\mathrm{ac}}(\mathcal{C}^{+}_{\omega}) = \Sigma_{\mathrm{ac}}^+ \textrm{ for all }\omega.
$$
Similarly, one finds that there is a set $\Sigma_{\mathrm{ac}}^- \subseteq \partial \D$ such that
$$
\sigma_{\mathrm{ac}}(\mathcal{C}^{-}_{\omega})=\Sigma_{\mathrm{ac}}^- \textrm{ for all }\omega.
$$
The results and discussion in \cite[Section~10.9]{S05} also imply that $\Sigma_{\mathrm{ac}}^{+} = \Sigma_{\mathrm{ac}}^{-} =: \Sigma_{\mathrm{ac}}(f)$. Therefore, $\sigma_{\mathrm{ac}}(\mathcal{E}_{\omega}) = \Sigma_{\mathrm{ac}}(f)$ for all $\omega$.
\qed

\subsection{A Brief Discussion of Theorem~\ref{t.main0b}}

In this subsection we briefly sketch the proof of Theorem~\ref{t.main0b}. As was mentioned in the introduction, the statement in Theorem~\ref{t.main0b} about the spectrum being almost everywhere constant is \cite[Theorem~10.16.1]{S05}. To get the other statements in Theorem~\ref{t.main0b} one needs to run an analogous proof with the spectral projections replaced by the respective partial spectral projections. The key technical point to address here concerns measurability. Once that has been addressed, the result follows readily from covariance.

Let us first state an elementary and well-known lemma (see, e.g., \cite[Lemma~4.1.2]{DF} for the version of this lemma for finite Borel measures on the real line).

\begin{lemma}{\label{S-part}}
Fix a finite Borel measure $\mu$ on $\partial \D$ and a countable dense subset $S$ of $\partial \D$. Let $\mathcal{J}$ denote the countable collection of all finite unions of open arcs in $\partial \D$ whose endpoints belong to $S$. Moreover, let $\mathcal{J}_{\varepsilon}$ denote the collection of all $J \in \mathcal{J}$ such that $\mathrm{Leb}(J) < \varepsilon$. Then
$$
\mu_s(B) = \lim_{\varepsilon \rightarrow 0} \sup_{J \in \mathcal{J}_{\varepsilon}} \mu(B\cap J)
$$
for any Borel set $B$.
\end{lemma}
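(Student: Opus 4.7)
The plan is to use the Lebesgue decomposition $\mu = \mu_{\mathrm{ac}} + \mu_s$ relative to normalized Lebesgue measure on $\partial\D$ and to establish the claimed equality via matching upper and lower bounds on $\sup_{J \in \mathcal{J}_\varepsilon} \mu(B \cap J)$ as $\varepsilon \to 0$.

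For the upper bound, for any $J \in \mathcal{J}_\varepsilon$ I would write $\mu(B \cap J) = \mu_s(B \cap J) + \mu_{\mathrm{ac}}(B \cap J) \le \mu_s(B) + \mu_{\mathrm{ac}}(J)$. By absolute continuity of $\mu_{\mathrm{ac}}$ with respect to $\Leb$, for every $\eta > 0$ there exists $\varepsilon > 0$ such that $\Leb(E) < \varepsilon$ implies $\mu_{\mathrm{ac}}(E) < \eta$; applied uniformly to all $J \in \mathcal{J}_\varepsilon$, this yields $\limsup_{\varepsilon \to 0} \sup_{J \in \mathcal{J}_\varepsilon} \mu(B \cap J) \le \mu_s(B)$.

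For the lower bound, since $\mu_s \perp \Leb$, pick a Borel set $N \subseteq \partial\D$ with $\Leb(N) = 0$ and $\mu_s(\partial\D \setminus N) = 0$, so that $\mu_s(B) = \mu_s(B \cap N)$. Given $\delta > 0$, use inner regularity of $\mu_s$ to choose a compact $K \subseteq B \cap N$ with $\mu_s(K) > \mu_s(B) - \delta$, and use outer regularity of $\Leb$ to pick an open set $U \supseteq N$ with $\Leb(U) < \varepsilon$. For every point of $K$, density of $S$ lets me find a small open arc containing that point with endpoints in $S$ and entirely contained in $U$; compactness of $K$ then extracts a finite subcover whose union $J$ lies in $\mathcal{J}$, satisfies $K \subseteq J \subseteq U$, and therefore $\Leb(J) < \varepsilon$, i.e., $J \in \mathcal{J}_\varepsilon$. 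Consequently $\mu(B \cap J) \ge \mu_s(B \cap J) \ge \mu_s(K) > \mu_s(B) - \delta$. Since $\delta$ is arbitrary, $\liminf_{\varepsilon \to 0} \sup_{J \in \mathcal{J}_\varepsilon} \mu(B \cap J) \ge \mu_s(B)$, which combined with the upper bound completes the proof.

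The main technical step is the arc-approximation in the lower bound: one must confirm that a compact subset of an open set in $\partial\D$ can be covered by a finite union of arcs with endpoints drawn from a prescribed countable dense set, without inflating the Lebesgue measure beyond the enclosing open set. This is a routine consequence of compactness plus density of $S$, but it is the one point where the specific countable structure of $\mathcal{J}$ (as opposed to the family of all finite unions of open arcs) enters. Everything else reduces to standard regularity properties of finite Borel measures on $\partial\D$.
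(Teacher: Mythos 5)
Your argument is correct. The paper itself states this lemma without proof (referring to \cite{DF} for the real-line version), and your two-sided estimate --- the $\varepsilon$--$\eta$ form of absolute continuity of $\mu_{\mathrm{ac}}$ for the upper bound, and inner regularity together with the concentration of $\mu_s$ on a Lebesgue-null set, covered by a finite union of arcs with endpoints in $S$, for the lower bound --- is precisely the standard argument that reference has in mind, carried over from $\R$ to $\partial\D$ without difficulty.
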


To complete the proof of Theorem~\ref{t.main0b}, it suffices to prove the family $\{ \mathcal{P}_{\omega}^{\bullet}(I) \}_{\omega \in \Omega}$ is weakly measurable (i.e., $\omega \mapsto \langle \phi, \mathcal{P}_{\omega}^{(\bullet)}(I) \psi \rangle$ defines a measurable function from $\Omega$ to $\mathbb{C}$ for all $\phi, \psi \in \mathcal{H}$) for $\bullet \in \{ \mathrm{ac,sc,pp} \}$ and for $I \subset \partial\D$. Here $\mathcal{P}_{\omega}(I)$ is the spectral projection onto the arc $I$ associated with $\mathcal{E}_{\omega}$. $\mathcal{P}_{\omega}^{(\mathrm{ac})}(I)$, $\mathcal{P}_{\omega}^{(\mathrm{sc})}(I)$ and $\mathcal{P}_{\omega}^{(\mathrm{pp})}(I)$ are given by $\mathcal{P}_{\omega}^{(\bullet)}(I) = \mathcal{P}_{\omega}(I) \mathcal{P}_{\omega}^{(\bullet)}$, $\bullet \in \{ \mathrm{ac,sc,pp} \}$, respectively.

By the RAGE theorem for unitary operators (see, e.g., \cite[Theorem A.2]{FO17}),
\begin{equation}\label{eqp}
\langle \phi, \mathcal{P}_{\omega}^{(\mathrm{c})} \psi \rangle = \lim_{J \rightarrow \infty} \lim_{N \rightarrow \infty} \frac{1}{2N+1} \langle \phi, \mathcal{E}_{\omega}^{-N} (I - P_{J}) \mathcal{E}_{\omega}^N \psi \rangle,
\end{equation}
where $P_J$ denotes the orthogonal projection onto the linear span of $\delta_{-J},\ldots,\delta_J$ in $\ell^2(\Z)$ and $\phi,\psi\in\ell^2(\Z)$. Since $\mathcal{E}_{\omega}$ is weakly measurable, every polynomial in $\mathcal{E}_{\omega}$ is also weakly measurable. Then $\mathcal{P}_{\omega}(I)$ and $\mathcal{P}_{\omega}^{(\mathrm{c})}$ are weakly measurable. Thus, $\mathcal{P}_{\omega}^{(\mathrm{c})}(I)$ is weakly measurable since $\mathcal{P}_{\omega}^{(\mathrm{c})}(I) = \mathcal{P}_{\omega}(I) \mathcal{P}_{\omega}^{(\mathrm{c})}$.

Due to Lemma~\ref{S-part}, we have
$$
\langle \phi, \mathcal{P}_{\omega}^{(\mathrm{s})}(I) \psi \rangle = \lim_{\varepsilon \rightarrow 0} \sup_{J \in \mathcal{J}_{\varepsilon}} \langle \phi, \mathcal{P}_{\omega} (I \cap J) \psi \rangle.
$$
By weak measurability of $\mathcal{P}_{\omega}$, polarization, and countability of $\mathcal{J}_{\varepsilon}$, it follows that $\mathcal{P}_{\omega}^{(\mathrm{s})}(I)$ is weakly measurable.

Therefore,
$$
\mathcal{P}_{\omega}^{(\mathrm{ac})} = \mathcal{P}_{\omega}^{(\mathrm{c})} (I - \mathcal{P}_{\omega}^{(\mathrm{s})}), \quad
\mathcal{P}_{\omega}^{(\mathrm{sc})} = \mathcal{P}_{\omega}^{(\mathrm{c})} \mathcal{P}_{\omega}^{(\mathrm{s})}, \quad
\mathcal{P}_{\omega}^{(\mathrm{pp})} = \mathcal{P}_{\omega}^{(\mathrm{s})} (I - \mathcal{P}_{\omega}^{(\mathrm{c})})
$$
are all weakly measurable. \qed

\section{Kotani Theory for Extended CMV Matrices}\label{a.appb}

In this section we state some of the key results from Kotani theory that we need in the main part of the paper. Since the statements we need are not discussed in the exact same form in the literature, we provide some explanation regarding their proofs. Specifically, Kotani theory for OPUC and CMV matrices has been discussed in \cite{GT94} and \cite{S05}. However, \cite{GT94} was written prior to the general use of CMV matrices, and hence does not discuss results for these matrices, and \cite{S05} does on the one hand focus on CMV rather than extended CMV matrices, and on the other hand, and much more importantly, the statements of the main theorems of Kotani theory in \cite{S05} assume the condition \eqref{eq5}, which we cannot assume if we pursue the results the present paper wants to establish. Thus, our main goal in writing this appendix is to explain why the main results from Kotani theory for extended CMV matrices, once suitably formulated and interpreted, hold without the assumption \eqref{eq5}.

The Thouless formula connects the Lyapunov exponent and the \emph{density of zeros measure}. Establishing the Thouless formula is an important preliminary step to developing Kotani theory. The proof of the Thouless formula given in \cite{S05} assumes that the density of zeros measure exists as a measure on $\partial \D$; see especially \cite[Theorems~10.5.8 {\&} 10.5.26]{S05}.

However, if in this way one starts the discussion of Kotani theory from the density of zeros measure, one is faced with the unpleasant realization that in the trivial case ($f \equiv 0$), all zeros sit at $0$, and hence the density of zeros measure is the Dirac mass at $0$ in this case, and it is in particular not a measure on $\partial \D$. An intriguing conjecture, \cite[Conjecture~10.5.23]{S05}, states that in all other cases, the density of zeros measure does exist as a measure on $\partial \D$. This conjecture is still (wide) open. On the other hand, \cite[Theorem~10.5.19]{S05} shows that under the assumption \eqref{eq5}, the density of zeros measure does exist as a measure on $\partial \D$.

Since we do not want to assume \eqref{eq5}, given the applications presented in this paper, it is important to note that an alternative version of the Thouless formula connects the Lyapunov exponent and the \emph{density of states measure}. The latter exists without any assumption on $f$. This change of perspective and the generality in which the formula holds are discussed in the second remark following \cite[Theorem~10.5.26]{S05}. We state the corresponding result in Theorem~\ref{thouless} below.

Let us first specify the setting: $(\Omega, d\beta)$ is a probability measure space, $T : \Omega \rightarrow \Omega$ is an invertible measure-preserving ergodic transformation, and $f : \Omega \rightarrow \mathbb{D}$ a measurable function. The (in the language of Simon \cite{S05}) \textit{stochastic Verblunsky coefficients} associated to $(\Omega, d\beta, T, f)$ are the random variables on $\Omega$ given by
$$
\alpha_n(\omega) = f(T^n \omega) , \; n \in \mathbb{Z}.
$$

The extended CMV matrix associated with the sequence $\{ \alpha_n(\omega) \}_{n \in \Z}$ is denoted by $\mathcal{E}_\omega$. Averaging the spectral measure corresponding to the pair $(\mathcal{E}_\omega , \delta_0)$ with respect to $d\beta$, we obtain the \emph{density of states measure} $d\nu$:
$$
\int g(\theta) \, d\nu(\theta) = \int \langle \delta_0, g(\mathcal{E}_{\omega}) \delta_0 \rangle \, d\beta(\omega)
$$
for any $g\in C(\partial \D)$.

\begin{theorem}\label{thouless}
Assume \eqref{eq4}. Then there is $\rho_{\infty} \in (0,1]$ such that for almost every $\omega \in \Omega$ with respect to $d \beta$, we have
$$
\rho_{\infty} = \lim_{N \rightarrow \infty} \left( \prod_{n = 0}^{N-1}\rho_n(\omega) \right)^{\frac{1}{N}}.
$$
Moreover we have
$$
\gamma_{f}(z) = -\log {\rho_{\infty}} - \int \log{|z-y|^{-1}} \, d\nu(y).
$$
\end{theorem}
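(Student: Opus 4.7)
The plan is to establish the two parts of the statement in turn.

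\emph{Existence of $\rho_\infty$.} Apply Birkhoff's ergodic theorem to the function $\omega \mapsto \log\rho_0(\omega) = \tfrac12\log(1-|f(\omega)|^2)$. Since $1 - |f|^2 \ge 1 - |f|$, we have $|\log\rho_0(\omega)| \le \tfrac12(-\log(1-|f(\omega)|))$, which is integrable by hypothesis \eqref{eq4}. Setting $\log\rho_\infty := \int \log\rho_0(\omega)\,d\beta(\omega) \in (-\infty, 0]$, Birkhoff yields $\frac{1}{N}\sum_{n=0}^{N-1}\log\rho_n(\omega) \to \log\rho_\infty$ for $\beta$-a.e.\ $\omega$, which exponentiates to the stated geometric-mean convergence with $\rho_\infty \in (0, 1]$.

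\emph{Thouless formula.} The plan is to match two subharmonic expressions on $\partial\D$: the logarithmic potential $U(z) := \int \log|z-y|\,d\nu(y)$ of the density of states measure, and the function $\gamma_f(z) + \log\rho_\infty$. Both will be realized as limits of $\frac{1}{N}\log|\Phi_N^{(\eta)}(z,\omega)|$, where the paraorthogonal polynomial $\Phi_N^{(\eta)}(z) := z\Phi_{N-1}(z) - \bar\eta\,\Phi_{N-1}^*(z)$, $\eta \in \partial\D$, is monic of degree $N$ whose zeros lie on $\partial\D$ and coincide with the spectrum of a canonical unitary truncation $\mathcal{E}_{\omega,N}^{(\eta)}$ of $\mathcal{E}_\omega$. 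Thus
$$
\frac{1}{N}\log|\Phi_N^{(\eta)}(z,\omega)| = \int \log|z-y|\,d\nu_{N,\omega}^{(\eta)}(y)
$$
for the normalized eigenvalue counting measure $\nu_{N,\omega}^{(\eta)}$. Averaging over $\eta$ with respect to Haar measure and using the standard spectral averaging identity for CMV matrices, the resulting $\eta$-averaged measures converge weakly, for $\beta$-a.e.\ $\omega$, to the density of states $d\nu$; uniform integrability of $y \mapsto \log|z-y|$, provided by uniform $L^1_{\mathrm{loc}}(\partial\D)$ bounds on the potentials through subharmonicity, allows the limit and integral to be exchanged, producing $U(z)$. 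On the other side, the factorization $\Phi_N^{(\eta)} = \prod_{n=0}^{N-1}\rho_n(\omega)\cdot\phi_N^{(\eta)}$, together with the Szeg\H{o} transfer-matrix identity evaluated on $(1,1)^\top$ and the relation $T_N^z = z^{N/2}M_N^z$, yields via Kingman's subadditive ergodic theorem the upper bound $\limsup_N \frac{1}{N}\log|\Phi_N^{(\eta)}(z,\omega)| \le \log\rho_\infty + \gamma_f(z)$ for every $z \in \partial\D$, with equality at Lebesgue-a.e.\ $z \in \partial\D$ from Oseledets growth and subharmonic uniqueness. Matching the two evaluations gives the Thouless formula, equivalent to the claim.

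\emph{Main obstacle.} The treatment in \cite[Chapter~10]{S05} invokes hypothesis \eqref{eq5} precisely to ensure that the zeros of the monic orthogonal polynomials $\Phi_N$ accumulate on $\partial\D$ rather than at $z = 0$, so that the density of zeros measure is a well-defined measure on $\partial\D$. Without \eqref{eq5}, this can fail. The workaround is to replace monic orthogonal polynomials by paraorthogonal ones, whose zeros lie on $\partial\D$ for every $N$ and every $\eta$ irrespective of \eqref{eq5}, and to work throughout with the density of states measure $d\nu$, which is defined on $\partial\D$ as an expected spectral measure without any integrability hypothesis. The most delicate step is justifying the uniform integrability/limit exchange when $z$ lies in the topological support of $d\nu$; this is handled by standard potential-theoretic arguments for families of subharmonic functions with uniformly bounded Riesz mass.
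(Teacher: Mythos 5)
Your proposal is correct and follows essentially the same route as the paper: the paper's proof of Theorem~\ref{thouless} is simply a citation to (the proof of) \cite[Theorem~10.5.26]{S05} together with the second remark following it, and the content of that remark is precisely what you reconstruct --- Birkhoff's theorem applied to $\log\rho_0\in L^1$ (integrable by \eqref{eq4}) for the existence of $\rho_\infty$, and the Thouless formula run with the density of states measure and unitary truncations/paraorthogonal polynomials in place of the density of zeros measure, which is exactly how one avoids hypothesis \eqref{eq5}. Your identification of the role of \eqref{eq5} (zeros of $\Phi_N$ possibly accumulating at $0$) and the workaround match the paper's own discussion preceding the theorem.
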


\begin{proof}
As indicated above, this theorem is a consequence of (the proof of) \cite[Theorem~10.5.26]{S05} and the discussion of this result in \cite{S05}, especially the second remark after the statement of \cite[Theorem~10.5.26]{S05}.
\end{proof}

The two key Kotani theory statements for extended CMV matrices we need in this paper are the following. The first is the identity \eqref{e.kotanithm}.

\begin{theorem}\label{th1}
Assume \eqref{eq4}. Then, $\Sigma_\mathrm{ac}(f) = \overline{\{ z \in \partial \D : \gamma_f(z) = 0 \}}^{\mathrm{ess}}$.
\end{theorem}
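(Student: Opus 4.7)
The plan is to prove Theorem~\ref{th1} by reducing it to the (half-line) version of Kotani theory developed in \cite[Chapter~10]{S05}, while being careful that our hypothesis is only the weaker integrability condition \eqref{eq4}. The reduction to half-lines proceeds via \eqref{e.acwlhl}: up to a finite-rank (hence trace class) perturbation, $\mathcal{E}_\omega$ decouples as a direct sum $\mathcal{C}^+_\omega \oplus \mathcal{C}^-_\omega$ of two half-line CMV matrices, so by trace class invariance of the absolutely continuous spectrum of a unitary,
$$
\Sigma_\mathrm{ac}(f) = \sigma_\mathrm{ac}(\mathcal{C}^+_\omega) \cup \sigma_\mathrm{ac}(\mathcal{C}^-_\omega) \quad \text{for $\beta$-a.e.\ }\omega.
$$
Both half-line families are ergodic CMV cocycles that share the same Lyapunov exponent $\gamma_f$. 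Hence it suffices to establish $\sigma_\mathrm{ac}(\mathcal{C}^{\pm}_\omega) = \overline{\{z \in \partial\D : \gamma_f(z) = 0\}}^\mathrm{ess}$ in the half-line setting under \eqref{eq4} alone.

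For the easy inclusion $\Sigma_\mathrm{ac}(f) \subseteq \overline{\{\gamma_f = 0\}}^\mathrm{ess}$, I would use the unitary analogue of the Ishii--Pastur argument: if $\gamma_f(z) > 0$ for Lebesgue-a.e.\ $z$ in a Borel set $A \subseteq \partial\D$, then by Oseledets the Szeg\H{o} cocycle is non-uniformly hyperbolic for a.e.\ $z \in A$, and via the subordinacy theory for CMV matrices (cf.\ \cite[Chapter~10]{S05}), this forces the a.c.\ spectral measures of $\mathcal{C}^{\pm}_\omega$ to assign no mass to $A$.

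For the reverse and deeper inclusion $\overline{\{\gamma_f = 0\}}^\mathrm{ess} \subseteq \Sigma_\mathrm{ac}(f)$, I would follow the Schur-function argument in \cite[Section~10.11]{S05} applied to the half-line objects. The key inputs are the integral identity \eqref{e.mandgamma} for $m^+_{\omega,f}$ and the Thouless formula, which together control the boundary behavior of $m^{\pm}_{\omega,f}$ as $|z| \uparrow 1$ and, on a set of positive measure where $\gamma_f = 0$, produce non-tangential boundary values of $m^+$ and $m^-$ satisfying a reflection symmetry; this symmetry then forces a non-zero a.c.\ component on every arc meeting $\{\gamma_f = 0\}$ in positive measure.

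The main obstacle is verifying that this half-line Kotani argument goes through under \eqref{eq4} alone, without \eqref{eq5}. Inspecting \cite[Sections~10.5 and~10.11]{S05}, the sole place where \eqref{eq5} intervenes is in establishing the Thouless formula via the density of zeros measure, where \eqref{eq5} is precisely what guarantees that the density of zeros is supported on $\partial\D$ rather than being pulled toward the origin (cf.\ \cite[Theorem~10.5.19]{S05} and the second remark after \cite[Theorem~10.5.26]{S05}). Our Theorem~\ref{thouless} circumvents this by replacing the density of zeros with the density of states measure $d\nu$, which always exists as a probability measure on $\partial\D$ and yields the same logarithmic-potential identity for $\gamma_f$. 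Substituting Theorem~\ref{thouless} for the Thouless formula throughout \cite[Section~10.11]{S05}, and tracking through the subsequent harmonic-analytic identities (boundary behavior of the Carath\'eodory and Schur functions, the equality of $L^1$-norms with $\gamma_f$, and the Kotani reflection symmetry at points of vanishing $\gamma_f$), each step previously justified by \eqref{eq5} is seen to need only the Thouless identity, and the conclusion of Theorem~\ref{th1} follows.
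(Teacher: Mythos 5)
Your proposal is correct and follows essentially the same route as the paper: reduce the extended CMV statement to Simon's half-line result \cite[Theorem~10.11.1]{S05} via the decomposition \eqref{e.acwlhl}, and dispense with \eqref{eq5} by substituting the density-of-states Thouless formula (Theorem~\ref{thouless}) for the density-of-zeros version, noting that the half-line argument only uses the logarithmic-potential identity for $\gamma_f$. The paper's proof is terser but identical in structure.
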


\begin{proof}
This is essentially \cite[Theorem~10.11.1]{S05}. However, there are two points we need to address here. First, we have switched from the density of zeros measure to the density of states measure in the discussion of the Thouless formula, while Simon proves \cite[Theorem~10.11.1]{S05} based on the Thouless formula that involves the density of zeros measure. Second, that theorem makes a statement about the almost sure absolutely continuous spectrum of standard (rather than extended) CMV matrices, while our statement involves extended CMV matrices.

\medskip

To address the first item, we need to inspect the proof \cite[Theorem~10.11.1]{S05} and note that the feature of the Thouless formula which is used there is the fact that the right-most term is the (negative of) the logarithmic energy of a probability measure on the unit circle. At that point it no longer matters whether this measure is obtained as a limit of the distribution of zeros of orthogonal polynomials or the $\beta$-average of the spectral measure associated with the pair $(\mathcal{E}_\omega , \delta_0)$.

\medskip

To address the second item, recall that \eqref{e.acwlhl} expresses the absolutely continuous spectrum of the extended (i.e., whole-line) CMV matrix in terms of the absolutely continuous spectra of the two half-line restrictions:
$$
\sigma_{\mathrm{ac}}(\mathcal{E}_{\omega}) = \sigma_{\mathrm{ac}}(\mathcal{C}_{\omega}^{+}) \cup \sigma_{\mathrm{ac}}(\mathcal{C}_{\omega}^{-})
$$
and as was explained in the proof of Theorem~\ref{t.main0a} above,\footnote{To be a bit more precise, the discussion in the proof of Theorem~\ref{t.main0a} is given in the topological setting considered there (where $T$ is a homeomorphism), but the arguments hold equally well in the measurable setting we consider here.} for $\beta$-almost every $\omega \in \Omega$, we have
$$
\Sigma_\mathrm{ac}(f) = \sigma_{\mathrm{ac}}(\mathcal{E}_{\omega}) = \sigma_{\mathrm{ac}}(\mathcal{C}_{\omega}^{+}) = \sigma_{\mathrm{ac}}(\mathcal{C}_{\omega}^{-}).
$$
Now, \cite[Theorem~10.11.1]{S05} states that for $\beta$-almost every $\omega \in \Omega$,
$$
\sigma_{\mathrm{ac}}(\mathcal{C}_{\omega}^{+}) = \overline{\{ z \in \partial \D : \gamma_f(z) = 0 \}}^{\mathrm{ess}}.
$$
Combining these two identities, the theorem follows.
\end{proof}

The second is the vanishing of these sets in the aperiodic finitely valued setting, used in the proof of Lemma~\ref{AD05L3}.

\begin{theorem}\label{th2}
Assume \eqref{eq4}. If the sampling function $f$ takes only finitely many values and the process is not periodic {\rm (}i.e., it is false that for some fixed $p$ and almost every $\omega$, $\alpha_{n+p}(\omega) = \alpha_n(\omega)$ for all $n \in \mathbb{Z}${\rm )}, then
$$
\Leb( \{ z \in \partial \D : \gamma_f(z) = 0 \} ) = 0.
$$
In particular, in this case we have $\Sigma_\mathrm{ac}(f) = \emptyset$.
\end{theorem}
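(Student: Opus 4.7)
The plan is to adapt the Kotani--Last strategy --- finitely-valued non-periodic ergodic coefficients produce empty almost-sure absolutely continuous spectrum --- from the Schr\"odinger setting to the extended CMV setting, relying only on the Kotani identity of Theorem~\ref{th1}, which holds under just~\eqref{eq4}. I would begin by observing that the two assertions of the theorem are equivalent: Theorem~\ref{th1} gives $\Sigma_{\mathrm{ac}}(f) = \overline{\mathcal{Z}}^{\mathrm{ess}}$, where $\mathcal{Z} := \{z \in \partial\D : \gamma_f(z) = 0\}$, and a subset of $\partial\D$ has Lebesgue measure zero if and only if its essential closure is empty. So the ``in particular'' clause is automatic, and it suffices to show $\Leb(\mathcal{Z}) = 0$.

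I would then argue by contradiction: assume $\Leb(\mathcal{Z}) > 0$ and invoke the \emph{deterministic} half of Kotani theory for extended CMV matrices. The Schr\"odinger analog asserts that on the set of vanishing Lyapunov exponent, the two half-line $m$-functions $m^\pm_{\omega,f}$ have boundary values related by a reflection identity, so $m^-$ is determined a.e.\ on $\mathcal{Z}$ by $m^+$. Transferred to CMV via the reflection formulas of \cite[Chapter~10]{S05}, and combined with the fact that the $m^+$-function determines the half-line Verblunsky coefficients uniquely, this produces a measurable function $F : \D^{\N} \to \D$ with $\alpha_{-1}(\omega) = F(\alpha_0(\omega), \alpha_1(\omega), \ldots)$ for $\beta$-a.e.\ $\omega$. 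Shift-covariance then upgrades this identity to $\alpha_n(\omega) = F(\alpha_{n+1}(\omega), \alpha_{n+2}(\omega), \ldots)$ for every $n \in \Z$ and $\beta$-a.e.\ $\omega$, that is, the past of the process is a measurable function of its future. When $f$ takes only finitely many values, the image process lies in a finite alphabet $A \subset \D$, and ``past determined by future'' over a finite alphabet, combined with ergodicity, is classically known to force periodicity of the process (the combinatorial/pigeonhole step at the core of the original Kotani--Last theorem), contradicting the hypothesis.

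The main obstacle is establishing the CMV determinism/reflection identity on $\mathcal{Z}$ for extended CMV matrices \emph{without} assuming~\eqref{eq5}: the treatment in \cite[Section~10.11]{S05} routes through the density-of-zeros measure, which is exactly where~\eqref{eq5} enters. I would instead route through the density-of-states version of the Thouless formula (Theorem~\ref{thouless}) and verify that the subsequent boundary-value and inverse-spectral steps carry over without change --- this is the delicate bookkeeping of the argument. The reduction from extended to half-line CMV (via $\sigma_{\mathrm{ac}}(\mathcal{E}_\omega) = \sigma_{\mathrm{ac}}(\mathcal{C}^+_\omega) \cup \sigma_{\mathrm{ac}}(\mathcal{C}^-_\omega)$ from the proof of Theorem~\ref{t.main0a}) and the combinatorial passage from determinism to periodicity are by now routine and can be imported from their Schr\"odinger counterparts.
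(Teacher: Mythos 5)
Your proposal is correct in outline but takes a genuinely different --- and much heavier --- route than the paper. The paper's entire proof of the first assertion is a citation: $\Leb(\{z \in \partial\D : \gamma_f(z)=0\})=0$ for finitely-valued aperiodic processes \emph{is} \cite[Theorem~10.11.3]{S05}, the CMV analog of the Kotani--Last finite-valued theorem, and the ``in particular'' clause then follows from Theorem~\ref{th1} exactly as you say (your observation that a measurable subset of $\partial\D$ is Lebesgue-null iff its essential closure is empty is the same reduction the paper uses). What you propose instead is to \emph{re-derive} the cited theorem from scratch: determinism of the process on the set of vanishing Lyapunov exponent via the reflection identity for the $m^\pm$-functions, shift-covariance to get ``past is a measurable function of the future,'' and the pigeonhole step forcing periodicity over a finite alphabet. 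That architecture is the standard and correct one, and your diagnosis of where \eqref{eq5} threatens to enter (the density-of-zeros route to the Thouless formula) matches the paper's own concerns in Appendix~B; routing through the density-of-states Thouless formula (Theorem~\ref{thouless}) is the right fix. The trade-off: the paper buys brevity by outsourcing the entire determinism-implies-periodicity machine to Simon, while your version would be self-contained but leaves its hardest step --- verifying that the boundary-value and inverse-spectral arguments establishing determinism on $\{\gamma_f = 0\}$ survive without \eqref{eq5} --- as unexecuted ``bookkeeping.'' Since that step is precisely the technical content of \cite[Theorem~10.11.3]{S05}, your proposal as written is an honest plan rather than a complete proof, but it is not wrong, and nothing in it contradicts the paper's argument.
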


\begin{proof}
The first statement is \cite[Theorem~10.11.3]{S05}. The second statement then follows from Theorem~\ref{th1}.
\end{proof}

\end{appendix}
{}

\end{document}